\documentclass[10pt, reqno]{amsart}
\usepackage{graphicx}
\usepackage{amsfonts, amssymb} 
\usepackage{mathrsfs,latexsym,mathtools}
\usepackage{tikz-cd}
\usepackage{varwidth}
\usepackage[dvipsnames]{xcolor}
\usepackage{enumerate}
\usepackage{float}
\usepackage[colorlinks=true, citecolor=blue, linkcolor=blue]{hyperref}
\usepackage{parskip} 
\usepackage{fullpage}

\newcommand{\R}{{\mathbb R}}
\newcommand{\N}{{\mathbb N}}
\newcommand{\Z}{{\mathbb Z}}

\newcommand{\C}{{\mathbb C}}
\newcommand{\p}{{\mathbb{P}}}
\newcommand{\B}{{\mathbb{B}}}

\newcommand{\lt}{\left}
\newcommand{\rt}{\right}

\newcommand{\wt}{\widetilde}
\newcommand{\eps}{\varepsilon}
\newcommand{\ph}{\varphi}

\newcommand{\mb}[1]{\mathbf{#1}}
\newcommand{\bs}[1]{\boldsymbol{#1}}
\newcommand{\Sf}[1]{\mathsf{#1}}
\newcommand{\dd}{\textup{d}}

\newcommand{\eqind}{\stackrel{d}{=}}

\newtheorem{theorem}{Theorem}
\newtheorem{app.theorem}{Theorem}[section]
\newtheorem{lemma}{Lemma}
\newtheorem{app.lemma}{Lemma}[section]
\newtheorem{prop}{Proposition}
\newtheorem{app.prop}{Proposition}[section]

\newtheorem{app.cor}{Corollary}[section]
\newtheorem{definition}{Definition}
\newtheorem{app.def}{Definition}[section]
\newtheorem{conjecture}{Conjecture}
\newtheorem{app.conj}{Conjecture}[section]

\DeclareMathOperator{\TW}{TW}

\title{Conditional line-to-point exponential directed last passage percolation under a one-point upper large deviation event}

\author{Rishi Dadlani}
\address{Department of Mathematics, University of Pennsylvania, Philadelphia, PA, 19104, USA}
\email{rdadlani@sas.upenn.edu}

\author{Shaotian Sun}
\address{Department of Mathematics, University of Michigan, Ann Arbor, MI, 48109, USA}
\email{shaotian@umich.edu}

\date{\today}

\begin{document}
\begin{abstract}
Exponential directed last passage percolation is a random maximization model connected to tandem queues and interacting particle systems. We study the line-to-point model of the percolation, in which the starting point is allowed to vary inside a line boundary, conditioned on an atypically large last-passage time to a fixed endpoint.  Extending recent results for the point-to-point model where the starting point is fixed, we develop heuristic predictions for the conditional limit shape and prove corresponding fluctuation results in certain regimes through steepest-descent analysis of exact multi-point formulas. Our calculations suggest that the conditional fluctuations near diagonal are described by Brownian bridges coupled through an additional shared Gaussian component.

\end{abstract}
\maketitle
\section{Introduction and main results}
\label{sec:intro}

\subsection{Model definition}

Let us denote the set of natural numbers as $\N$  and set $\N_0=\N\cup \{0\}$. 
For two points $\mb p= ( p_1,  p_2)$  and $\mb q= (q_1,  q_2)$ in $\N_0^2$ such that $p_1\le q_1$ and $p_2\le  q_2$, 
an up-right path from $\mb p$ to $\mb q$  is a sequence 
$\pi= \{ \mb v_i \}_{i=1}^r$ with $r=q_1+q_2-p_1-p_2+1$, where 
$\mb v_1=\mb p$, $\mb v_r= \mb q$, and $\mb v_{i+1}-\mb v_{i}\in \{ (1,0), (0,1)\}$ for every $i$.

\begin{definition}[Point-to-point Exponential LPP]\label{def:explpp}
Let $\{ \omega_{\mb v} : \mb v \in \N^2 \}$ be a collection of i.i.d.\@ exponential random variables of mean $1$. 
The last passage time from $\mb p$ to $\mb q$ is 
\begin{equation*}
    \mathcal L_{\mb p}(\mb q) = \max_{\pi: \mb p \to \mb q} E(\pi), 
    \qquad E(\pi)= \sum_{\mb v\in \pi} \omega_{\mb v}, 
\end{equation*}
where we take maximum over all up-right paths from $\mb p$ to $\mb q$. 
In particular, when $\mb p=(1,1)$, we write $\mathcal L_{(1,1)}(\mb q)= \mathcal L(\mb q)$. 
We call the 2-dimensional random field $\mathcal L= \{\mathcal L(\mb q) : \mb q \in \N_0^2\}$ the point-to-point exponential last passage percolation.  
For $(\alpha, \beta)\in \R_+^2$, we define
\begin{equation}
    \label{eq:LPPforreal}
    \mathcal L(\alpha, \beta)  = \mathcal L(\lceil\alpha\rceil, \lceil\beta\rceil)
\end{equation} 
where $\lceil\alpha \rceil$ denotes the smallest integer that is larger or equal to $\alpha$. 
\end{definition}

Next, we introduce additional notations for line-to-point exponential LPP. For $r \in \mathbb Z$, let $\mathbb L_r$ denote the line $\{(x,y)\in \mathbb Z^2: x+y=2r\}$.
To avoid notational overhead, we drop the subscript $r$ when $r = 0$. For $\mb v=(v_1,v_2) \in \mathbb Z^2$, we call $\mb v \succ \mathbb L_r$ if $v_1+v_2>2r$. In particular, we say $\mb v \succ \mathbb L$ if $v_1+v_2>0$.

\begin{definition}[Line-to-point Exponential LPP]\label{def:flat-explpp}
Let $\{ \omega_{\mb v} : \mb v \succ \mathbb L_r\}$ be a collection of i.i.d.\@ exponential random variables of mean $1$. Set $\omega_{\mb v}=0$ if $\mb v \in \mathbb L_r$. 
The last passage time from $\mathbb L_r$ to $\mb q$ is 
\begin{equation*}
    \mathcal L_{r}^{\mathrm{flat}}(\mb q) = \max_{\pi: \mb p \to \mb q, \mb p \in \mathbb L_r} E(\pi), 
    \qquad E(\pi)= \sum_{\mb v\in \pi} \omega_{\mb v}, 
\end{equation*}
where we take maximum over all up-right paths from any point $\mb p$ in $\mathbb L_r$ to $\mb q$. 
In particular, when $r=0$, we write $\mathcal L_{0}^{\mathrm{flat}}(\mb q)= \mathcal L^{\mathrm{flat}}(\mb q)$. 
Similarly, we call the 2-dimensional random field $\mathcal L^{\mathrm{flat}}= \{\mathcal L^{\mathrm{flat}}(\mb q) : \mb q \succ \mathbb L\}$ the line-to-point exponential last passage percolation, or flat exponential last passage percolation.
For $(\alpha, \beta)\in \R^2$ satisfying $\alpha + \beta >0$, we define
\begin{equation}
    \label{eq:flat-LPPforreal}
    \mathcal L^{\mathrm{flat}}(\alpha, \beta)  = \mathcal L^{\mathrm{flat}}(\lceil\alpha\rceil, \lceil\beta\rceil)
\end{equation} 
where $\lceil\alpha \rceil$ denotes the smallest integer that is larger or equal to $\alpha$. 
\end{definition}

See Figure~\ref{fig:lpp-geometries} for an illustration of the two models and their different geometries.
\begin{figure}[h]
\centering
\begin{tikzpicture}[scale=1.5]
% Point-to-point geometry
\begin{scope}
    \fill[lightgray] (0,0) rectangle (2,2);
    \draw[thin,gray,step=0.4] (0,0) grid (2,2);
    \draw[->,thin] (0,0) -- (2.3,0) node[right] {$x$};
    \draw[->,thin] (0,0) -- (0,2.3) node[left] {$y$};
    \draw[thin] (0,0) -- (0.4,0) -- (0.4,0.8)
        -- (1.2,0.8) -- (1.2,1.6) -- (2,1.6) -- (2,2);
    \filldraw (0,0) circle (1pt);
    \filldraw (2,2) circle (1pt);
    \node[below left] at (0,0) {$(0,0)$};
    \node[above right] at (2,2) {$\mathbf q$};
    \node at (1,-0.55) {(a)};
\end{scope}
% Line-to-point geometry (r=0)
\begin{scope}[shift={(4.5,1)}]
    \fill[lightgray] (-1.2,1.2) -- (1.2,-1.2)
        -- (1.2,1.2) -- cycle;
    \foreach \k in {-3,...,3} {
        \draw[thin,gray] ({0.4*\k},{-0.4*\k}) -- ({0.4*\k},1.2);
        \draw[thin,gray] ({-0.4*\k},{0.4*\k}) -- (1.2,{0.4*\k});
    }
    \draw[thin] (-1.4,1.4) -- (1.4,-1.4);
    \node[below left] at (0,0) {$\mathbb L$};
    \draw[thin] (-0.8,0.8) -- (-0.4,0.8) -- (-0.4,1.2) -- (1.2,1.2);
    \draw[thin,dashed] (0.4,-0.4) -- (0.4,0) -- (0.8,0)
        -- (0.8,0.8) -- (1.2,0.8) -- (1.2,1.2);
    \filldraw (-0.8,0.8) circle (1pt);
    \filldraw (0.4,-0.4) circle (1pt);
    \filldraw (1.2,1.2) circle (1pt);
    \node[below left] at (-0.8,0.8) {$\mathbf p$};
    \node[below left] at (0.4,-0.4) {$\mathbf p'$};
    \node[above right] at (1.2,1.2) {$\mathbf q$};
    \node at (0,-1.55) {(b)};
\end{scope}
\end{tikzpicture}
\caption{(a) Point-to-point LPP from $(0,0)$ to $\mathbf q$.
(b) Line-to-point LPP from $\mathbb L$ to $\mathbf q$.
The gray regions contain the admissible paths. The black solid and dashed
lines illustrate up-right paths.}
\label{fig:lpp-geometries}
\end{figure}

\subsection{Background and related results}

The point-to-point exponential LPP is equivalent to some fundamental models in probability and statistical physics,
such as the corner growth model with narrow wedge initial condition, and the continuous-time totally asymmetric
simple exclusion process (TASEP) with step initial condition. Similarly, the line-to-point exponential LPP is equivalent to the corner growth model with flat initial condition, and the continuous-time TASEP with flat initial condition. 

Many unconditional results regarding law of large number and fluctuations have been shown for both point-to-point exponential LPP and line-to-point exponential LPP:

\begin{itemize}
    \item Rost showed the law of large number for point-to-point case in \cite{Rost1981}:
    For every $(x,y)\in \R_+^2$, 
\begin{equation} \label{eq:LLN}
    \lim_{N\to \infty} \frac{\mathcal{L}(xN, yN)}{N} = 
    \bar{\mathcal L}(x,y):=(\sqrt{x} + \sqrt{y})^2
\end{equation}
almost surely and also in probability. 
We also use the convention that $\bar{\mathcal L}_{(x',y')}(x,y)=(\sqrt{x-x'}+\sqrt{y-y'})^2$ so that $\bar{\mathcal L}(x,y)= \bar{\mathcal L}_{(1,1)}(x,y)$.

\item As a direct consequence of Johansson's point-to-point shape theorem \cite{Joh00}, we have the law of large number for line-to-point case: 
\begin{align}
\label{eq:uncondltp-lln}
    \lim_{N \to \infty}\frac{\mathcal L^{\mathrm{flat}}(xN,yN)}{N}
    &=
    \bar{\mathcal L}^{\mathrm{flat}}(x,y) \notag\\
    &:=
    \sup_{-y\leq k\leq x}
    \left(\sqrt{x-k}+\sqrt{y+k}\right)^2
    =
    2(x+y)
\end{align}
almost surely and also in probability. Indeed, when we take supremum over $-y\leq k \leq x$, 
the unique maximizer is $k_*=\frac{x-y}{2}$.
Thus, the macroscopic starting point is the orthogonal projection of
$(x,y)$ onto $\mathbb L$.  In particular, for every $a>0$,
\begin{equation}
    \lim_{N \to \infty}\frac{\mathcal L^{\mathrm{flat}}(aN,aN)}{N}=4a
\end{equation}
almost surely and also in probability, and the corresponding macroscopic starting point is the origin.

\item The convergence of the one-point distributions for point-to-point case was obtained in \cite{Joh00} by Johansson: 
 \begin{equation}
    \label{eq:uncond-ptp-fluc}
    \lim_{N\to \infty} \frac{\mathcal L(xN, yN)-  \bar{\mathcal L}(x,y) N}{ (xy)^{-1/6}(\sqrt{x}+\sqrt{y})^{4/3}  N^{1/3}}  \eqind \TW_2
\end{equation} 
for every $(x,y)\in \R_+^2$, where $\TW_2$ is the GUE Tracy-Widom distribution. 
Here, $\eqind$ denotes convergence in distribution.

\item The convergence of the one-point distributions for line-to-point case was stated in \cite{sasamoto_spatial_2005, borodin_fluctuation_2007}: 
\begin{equation}
    \label{eq:uncond-ltp-fluc}
    \lim_{N\to \infty} \frac{\mathcal L^{\mathrm{flat}}(xN, xN)-  \bar{\mathcal L}^{\mathrm{flat}}(x,x) N}{ 2^{2/3}x^{1/3}N^{1/3}}  \eqind \TW_1
\end{equation} 
for every $x>0$, where $\TW_1$ is the GOE Tracy-Widom distribution. 

\item After appropriate scaling, for given $x>0$, the multi-point distributions for line-to-point case converges to the Airy$_1$ process $\mathcal A_1$: 
\begin{equation}
\label{eq:multi-point}
    s \mapsto \frac{{{\mathcal L}^{\mathrm{flat}}(xN+s(2xN)^{2/3},xN-s(2xN)^{2/3})}-\bar{\mathcal L}^{\mathrm{flat}}(x,x)N}{2^{4/3}x^{1/3}N^{1/3}} \Rightarrow 2^{1/3}\mathcal A_{1}(2^{-2/3}s)
\end{equation}
in the sense of convergence of finite-dimensional distributions \cite{borodin_fluctuations_2006, borodin_fluctuation_2007, borodin_large_2008}. It is worth mentioning that \cite{matetski2021} obtained formulae for the finite dimensional distributions of such limiting objects for a general class of initial data. In particular, we have the convergence of multi-point distributions for point-to-point and line-to-point case: 

\begin{enumerate}
    \item For given $(x,y)\in \R^2_+$, 
\begin{equation}
   \label{eq:uncond-ptp-field-fd-convergence}
    \lim_{N \to \infty} \frac{\mathcal L \left(t x N + s \frac{2 x^{2/3} y^{1/6}}{(\sqrt{x}+\sqrt{y})^{1/3}}  N^{2/3}, {t} y N - s \frac{2 x^{1/6} y^{2/3}}{(\sqrt{x}+\sqrt{y})^{1/3}}  N^{2/3}\right) - \bar{\mathcal L}(tx,ty)N}
    {(xy)^{-1/6}(\sqrt{x}+\sqrt{y})^{4/3}N^{1/3}} 
    \stackrel{f.d.d.}{=\!=\!=} \mathsf{H}_{\mathrm{step}}(s,t ) 
\end{equation} 
in the sense of finite-dimensional distributions for $(s,t) \in \R\times \R_+$,
where $\mathsf{H}_{\mathrm{step}}$ denotes the KPZ fixed point with narrow wedge initial condition;

\item For given $x\in \R_+$, 
\begin{equation}
   \label{eq:uncond-ltp-field-fd-convergence}
    \lim_{N \to \infty} \frac{{{\mathcal L}^{\mathrm{flat}}(txN+s(2xN)^{2/3},txN-s(2xN)^{2/3})}-\bar{\mathcal L}^{\mathrm{flat}}(tx,tx)N}{2^{4/3}x^{1/3}N^{1/3}} 
    \stackrel{f.d.d.}{=\!=\!=} \mathsf{H}_{\mathrm{flat}}(s,t ) 
\end{equation} 
in the sense of finite-dimensional distributions for $(s,t) \in \R\times \R_+$,
where $\mathsf{H}_{\mathrm{flat}}$ denotes the KPZ fixed point with flat initial condition.
\end{enumerate}

\item The upper large deviation result for line-to-point case follows from
\cite[Theorem~1.6]{Joh00}, together with a union bound:
\begin{equation}
    \lim_{N \to \infty} \frac{1}{N} \log \mathbb P(\mathcal L^{\mathrm{flat}}(aN, aN) > \ell N) = -\Sf J(\ell), \quad \text{for }\ell>\bar{\mathcal L}^{\mathrm{flat}}(a,a)=4a, %\footnote{This result is dervied as a special case of \cite[Proposition~3.4]{bhatia_moderate_2020}. The proposition is the specific case of \cite[Theorem~10.5]{basu_last_2016} in general line-to-point exponential LPP case.}
\end{equation}
where \begin{equation} \label{eq:large_deviation_fn}
    \Sf J(\ell)= \sqrt{D} + 2a \log \left(\frac{\ell - \sqrt{D}}{\ell + \sqrt{D}}\right), 
\end{equation}
with 
\begin{equation}
\label{eq:D}
    D=\ell^2-4a\ell.
\end{equation}
\end{itemize}

The goal of the paper is to investigate the behavior of line-to-point exponential LPP conditioned on the event that the last passage time at a specific site is larger than expected. The same problem was studied for the point-to-point exponential LPP in \cite{BCT} and \cite{baik_one-point_2026}. Some known results for the point-to-point exponential LPP under the conditioning are: 
\begin{itemize}
    \item Conditional law of large number for point-to-point exponential LPP was heuristically conjectured in \cite[Conjecture~1.3]{BCT}, and was proved partially. 
    \item Conditional diagonal multi-point fluctuations for point-to-point exponential LPP was shown in \cite[Theorem~1.4]{BCT}.
    \item Conditional fluctuations were computed for two-point distribution in some regions in \cite[Theorem~1.5]{BCT}, and for one-point distribution in some other regions in \cite{baik_one-point_2026}. 
    \item Conditional fluctuations in other regions for point-to-point exponential LPP was heuristically conjectured in \cite[Conjecture~1.6]{BCT}.
\end{itemize}
There are also results on conditional fluctuations in the upper-tail regime for the KPZ fixed points, with both narrow wedge initial conditions and flat initial condition in \cite[Remark~1.5]{liu2022}.

\subsection{Main results}
\subsubsection{Conjecture on conditional law of large numbers}

The first result of this paper is the conjecture on conditional law of large number for line-to-point exponential LPP. It may be compared with unconditional line-to-point law of large number \eqref{eq:uncondltp-lln} and conditional point-to-point law of large number \cite[Conjecture~1.3]{BCT}.

Fix $a>0$ and $\ell>4a$.  Following the convention of
\cite[Equation~(1.9)]{BCT}, for every event $E$, we interpret
\begin{equation}\label{eq:flat-conditioning-convention}
    \mathbb P\left(
        E\,\middle|\,
        \mathcal L^{\mathrm{flat}}(aN,aN)=\ell N
    \right)
    :=
    \lim_{\epsilon\downarrow0}
    \frac{
        \mathbb P\left(
            E\cap
            \left\{
                \mathcal L^{\mathrm{flat}}(aN,aN)
                \in(\ell N-\epsilon,\ell N+\epsilon)
            \right\}
        \right)
    }{
        \mathbb P\left(
            \mathcal L^{\mathrm{flat}}(aN,aN)
            \in(\ell N-\epsilon,\ell N+\epsilon)
        \right)
    }.
\end{equation}

To state the conjecture, define
\begin{equation}
    M
    =
    \frac{\ell-2a+\sqrt D}{2a},
    \qquad
    m=M^2
    =
    \frac{\ell-2a+\sqrt D}
         {\ell-2a-\sqrt D}.
    \label{eq:flat-M-and-m}
\end{equation}

Introduce two regions
\begin{align*}
    \mathcal R_1
    &=
    \left\{
        (x,y)\in(a,\infty)^2:
        \frac1m<\frac{y-a}{x-a}<m, \left|\sqrt{x-a}-\sqrt{y-a}\right|
        \leq\sqrt{\ell-4a}
    \right\},
\\
    \mathcal R_2
    &=
    \left\{
        (x,y)\in(0,\infty)^2:
        \frac1{M}
        \leq\frac yx\leq
        M
    \right\}\setminus\mathcal R_1.
\end{align*} See Figure $\ref{fig:Regions}$ for the picture
of the regions $\mathcal R_1$ and $\mathcal R_2$.

\begin{figure}[h]
\centering
\begin{tikzpicture}[scale=1.5]
    \pgfmathsetmacro{\c}{0.317506}
    \pgfmathsetmacro{\yright}{2 - 2*\c*sqrt(2-0.5) + \c*\c}
    \pgfmathsetmacro{\xtop}{\yright}

    % R2: outer light-gray region
    \fill[lightgray]
        (0,0) -- (1.28,0.82) -- (0.5, 0.5) -- (0.82,1.28) -- cycle;

    % R1: full dark-gray region
    \fill[gray]
        (0.5,0.5)
        -- (1.28,0.82)
        plot[domain=1.28:2,samples=100]
            (\x, {\x - 2*\c*sqrt(\x-0.5) + \c*\c})
        -- (2,2)
        -- (\xtop,2)
        plot[domain=2:1.28,samples=100]
            ({\x - 2*\c*sqrt(\x-0.5) + \c*\c}, \x)
        -- (0.82,1.28)
        -- cycle;
        \fill[gray]
        (0.5,0.5)
        -- (0.82,1.28)
        plot[domain=2:1.28,samples=100]
            ({\x - 2*\c*sqrt(\x-0.5) + \c*\c}, \x)
        -- (2, \xtop)
        -- (2,2)
        plot[domain=1.28:2,samples=100]
            (\x, {\x - 2*\c*sqrt(\x-0.5) + \c*\c})
        -- (1.28,0.82)
        -- cycle;
    \fill[gray]
        (0.5,0.5)
        -- (1.28,0.82)
        -- (1.5, 1.5)
        -- (0.82,1.28);

    % Outer boundary of R2
    \draw[thin] (0.82,1.28) -- (0,0) -- (1.28,0.82);
    \draw[thin] (0.82,1.28) -- (0.5,0.5) -- (1.28,0.82);;

    % Boundary of R1
    \draw[thin] (0.5,0.5) -- (1.28,0.82);
    \draw[thin] (0.5,0.5) -- (0.82,1.28);
    \draw[thin] plot[domain=1.28:2,samples=100]
        (\x, {\x - 2*\c*sqrt(\x-0.5) + \c*\c});
    \draw[thin] plot[domain=2:1.28,samples=100]
        ({\x - 2*\c*sqrt(\x-0.5) + \c*\c}, \x);

    % Axes
    \draw[->, thin] (0,0) -- (2.1,0) node[right] {$x$};
    \draw[->, thin] (0,0) -- (0,2.1) node[left] {$y$};

    % Labels
    \node at (1.6,1.6) {$\mathcal R_1$};
    \node at (0.3,0.3) {$\mathcal R_2$};

    % Mark point a
    \filldraw (0.5,0.5) circle (1pt);
    \node[below] at (0.5,0) {$a$};
    \node[left] at (0,0.5) {$a$};
\end{tikzpicture}
\caption{The dark gray region is $\mathcal R_1$. The light gray region is $\mathcal R_2$.}
\label{fig:Regions}
\end{figure}

\begin{conjecture}[Conditional flat law of large numbers]
\label{conj:lln}
Fix $a>0$ and $\ell>4a$. Conditional on
$\mathcal L^{\mathrm{flat}}(aN,aN)=\ell N$,
where the conditioning is understood in the sense of
\eqref{eq:flat-conditioning-convention}, we have, for every fixed
\(x,y>0\),
\begin{equation*}
    \lim_{N \to \infty}\frac{\mathcal L^{\mathrm{flat}}(xN,yN)}{N}
    =
    h^{\mathrm{flat}}(x,y)
\end{equation*}
in conditional probability, where
\begin{equation*}
    h^{\mathrm{flat}}(x,y)
    =
    \begin{cases}
        \displaystyle
        \ell+
        \bar{\mathcal L}_{(a,a)}(x,y),
        & (x,y)\in\mathcal R_1,\\[8pt]
        \displaystyle
        \frac{\ell}{2a}(x+y)
        -
        \frac{\sqrt D}{2a}|x-y|,
        & (x,y)\in\mathcal R_2,\\[8pt]
        \displaystyle
        \bar{\mathcal L}^{\mathrm{flat}}(x,y),
        & \text{otherwise}.
    \end{cases}
\end{equation*}
Equivalently, for every $\epsilon>0$,
\begin{equation*}
    \lim_{N \to \infty}\mathbb P\left(
        \left|
            \frac{\mathcal L^{\mathrm{flat}}(xN,yN)}{N}
            -
            h^{\mathrm{flat}}(x,y)
        \right|>\epsilon
        \,\middle|\,
        \mathcal L^{\mathrm{flat}}(aN,aN)=\ell N
    \right)
    =0.
\end{equation*}
\end{conjecture}

\subsubsection{Theorem for conditional diagonal multi-point fluctuations}

The second result of the paper is the conditional diagonal multi-point fluctuations in line-to-point exponential LPP. Compare it with unconditional line-to-point multi-point fluctuations \eqref{eq:multi-point} and conditional point-to-point diagonal multi-point fluctuations \cite[Theorem~1.4]{BCT}.
\begin{theorem}[Conditional diagonal multi-point fluctuations in flat exponential LPP]
\label{thm:cond-diag-fluc}
Fix $a > 0$ and $\ell> \bar{{\mathcal L}}^{\mathrm{flat}}(a,a)=4a$. Let $D=\ell^2-4a\ell$. 
Define the positive real number  $\sigma= \sqrt{\frac{\ell}{2a}} D^{1/4}$. 
Then, 
\begin{equation} \begin{split} \label{eq:diagfluc}
    &\mathrm{Law} \left( \left\{ \frac{ {\mathcal L}^{\mathrm{flat}}( t aN +  s \frac{a\sigma}{\sqrt{D}}   N^{1/2}, t aN -  s \frac{a\sigma}{\sqrt{D}}  N^{1/2})- t \ell N}{\sigma N^{1/2}} \right\}_{(s,t)\in \R\times (0,1)}
    \, \bigg| \,  {\mathcal L}^{\mathrm{flat}}(aN, aN)= \ell N \right) \\
    &\xrightarrow{\textit{f.d.d.}}  \mathrm{Law} \left( \{ \B_1(t) - |\B_2(t) -(1-t)Z - s| \}_{(s,t)\in \R\times (0,1)} \right)
\end{split} \end{equation}
as $N\to \infty$, where 
\begin{equation}
\label{eq:defcd}
    \B_1(t)= \frac{ \B_+(t)+  \B_-(t)}{\sqrt{2}},
    \quad 
    \B_2(t)= \frac{ \B_+(t)-  \B_-(t)}{\sqrt{2}} 
    \quad\text{and}\quad 
    Z \sim \mathcal{N}(0,1)
\end{equation}
for two independent standard Brownian bridges $\B_+$ and $\B_-$; thus, $\B_1$ and $\B_2$ are also two independent standard Brownian bridges.
\end{theorem}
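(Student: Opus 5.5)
Our plan follows the strategy of \cite[Theorem~1.4]{BCT}: express the conditional finite-dimensional distributions through an exact multi-point formula, then carry out a steepest-descent analysis in the large-deviation regime. The exact input is a Fredholm-determinant (or multiple contour integral) formula for the joint distribution of $\mathcal L^{\mathrm{flat}}$ at finitely many space-like points $\mb q_1,\dots,\mb q_k$ together with the endpoint $(aN,aN)$. Such a formula is available from the flat TASEP / line-to-point LPP literature \cite{borodin_fluctuation_2007, matetski2021}. We first rewrite it as a series of contour integrals, following the form used in \cite{BCT}. Differentiating in the last variable and dividing by the density of $\mathcal L^{\mathrm{flat}}(aN,aN)$ at $\ell N$ turns this into the conditional law in the sense of \eqref{eq:flat-conditioning-convention}.

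\emph{Step 1: identify the saddle points.} At the scale $\ell N$ the leading term of the series is governed by an exponent whose critical point is separated from the double critical point of the typical (Tracy--Widom) regime. This non-degenerate critical point is where $M$ and $m=M^2$ from \eqref{eq:flat-M-and-m} enter. The large-deviation rate $\Sf J(\ell)$ in \eqref{eq:large_deviation_fn} arises as the value of the exponent there, so the normalizing density cancels the corresponding factor exactly, up to a polynomial prefactor.

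\emph{Step 2: Gaussian expansion at intermediate points.} For points at times $t\in(0,1)$ with the $N^{1/2}$ transverse shift, expand the exponent to second order around the same critical point. Since the critical point is non-degenerate, the fluctuations are Gaussian at scale $N^{1/2}$ with variance $\sigma^2$. We check that the covariance in $t$ is of bridge type, $t_i(1-t_j)$, in two independent directions: the sum and difference combinations $\B_1,\B_2$, which come from the two "half-paths" $\B_\pm$ of the flat geometry. The main difference from the point-to-point case is that the starting point on $\mathbb L$ is free. It is integrated over a Gaussian window of size $N^{1/2}$ around the origin, and this produces the extra shared variable $Z$ entering through the term $(1-t)Z$.

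\emph{Step 3: the higher terms and the limit structure.} Show that the higher-order terms in the Fredholm expansion are exponentially or polynomially negligible after conditioning. This requires deforming contours to steepest-descent paths through the critical point while avoiding poles; we expect this to be the main obstacle. We would first try the contour choices of \cite{BCT}, adjusted for the flat kernel.

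Finally, the structure $\B_1(t)-|\B_2(t)-(1-t)Z-s|$ should appear as a maximum of two linear branches: paths to $(taN\pm\cdot)$ pass either left or right of the conditioned geodesic. Accordingly we verify that the limiting joint distribution function equals $\mathbb P\bigl(\B_1(t_i)-(\B_2(t_i)-(1-t_i)Z-s_i)\le h_i,\ \B_1(t_i)+(\B_2(t_i)-(1-t_i)Z-s_i)\le h_i\ \forall i\bigr)$. We expect this to come out as a Gaussian integral over a polyhedral domain, obtained directly from the residue structure of the leading term.
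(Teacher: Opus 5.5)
\textbf{There are genuine gaps.} Your skeleton matches the paper's: an exact formula, a ratio of densities, steepest descent at the non-degenerate saddles $z^\pm=(-\ell\pm\sqrt D)/(2\ell)$, a dominant $\mb n=\mb 1$ term and an exponentially small remainder. However, the proposal misidentifies the input, omits the one mechanism that is new in the flat case, and aims at a wrong final identity.

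\emph{The input formula.} For large $N$, the points $(t_iaN+s_i\frac{a\sigma}{\sqrt D}N^{1/2},\,t_iaN-s_i\frac{a\sigma}{\sqrt D}N^{1/2})$ and $(aN,aN)$ are time-like ordered: each lies strictly down-left of the next. So the space-like or fixed-time formulas of \cite{borodin_fluctuation_2007, matetski2021} do not give their joint law, and you need a multi-time formula. The paper uses Liu's \cite[Prop.~2.3]{liu2021} with the flat series $\mathcal D_{Y_{\mathrm{flat}}}$. This differs from the step case of \cite{BCT} only by the factor $(-1)^{n_1}\det[\delta(-\eta^1_r-1,\xi^1_s)]/K_{n_1}(\bs\xi^1|\bs\eta^1)$.

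\emph{Where $Z$ comes from.} That flat factor is the source of $Z$; $Z$ is not produced by a second-order expansion about a single critical point. Your Gaussian-window heuristic for the starting point on $\mathbb L$ is consistent with the answer, since $\B_2(t)-(1-t)Z$ starts at $-Z$, but it derives nothing. The actual mechanism is as follows.
\begin{itemize}
\item In the $\mb n=\mb 1$ term the delta forces $\xi^1=-1-\eta^1$.
\item Because the conditioning point is diagonal, $z^++z^-=-1$, so this map sends the $\eta$-saddle onto the $\xi$-saddle. This is also why BCT's steepest-descent circles can be kept with $C_{1,L}=-1-C_{1,R}$.
\item After rescaling, the coupling reads $u_1=v_1$, and the two level-one Gaussian factors merge into $e^{2t_1v_1^2+2h_1v_1}$.
\item Its integral is $\phi_{4t_1}(b_1-c_1)$, instead of the step-case product $\phi_{2t_1}(b_1)\phi_{2t_1}(c_1)$.
\item Writing $\phi_{4t_1}(b_1-c_1)=\int\phi_{2t_1}(b_1-z)\phi_{2t_1}(c_1-z)\,\dd z$ makes both processes start from a common value $z$.
\item The identity $\phi_2(z)^2=\phi_1(z)/\sqrt{8\pi}$ shows $z\sim\mathcal N(0,1)$ and absorbs the prefactor $\sqrt{8\pi}$.
\end{itemize}
The bridges $\B_\pm$ themselves are not a feature of the flat geometry. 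They are the $u$- and $v$-integrals at $z^-$ and $z^+$, exactly as in the step case; only $Z$ is new.

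\emph{The final identity.} Your target identity is false. $\B_1-|X|=\min\{\B_1-X,\B_1+X\}$ is a minimum (a tent in $s$), not a maximum. With $X_i=\B_2(t_i)-(1-t_i)Z-s_i$, the event $\{\B_1(t_i)-X_i\le h_i,\ \B_1(t_i)+X_i\le h_i\}$ equals $\{\B_1(t_i)+|X_i|\le h_i\}$. That is the distribution function of a different process, so the limit of the exact formula would not match it. The correct target is the joint upper tail $\p(\B_1(t_i)-X_i>h_i,\ \B_1(t_i)+X_i>h_i\ \forall i)$. This is also what the formula naturally delivers, since its events are $\{\mathcal L^{\mathrm{flat}}(M_i,N_i)>T_i\}$.

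\emph{Step 3.} Expanding the delta determinant costs an extra $n_1!$. After dividing by $(\mb n!)^2$, the $n_1$-sum therefore has no factorial decay. The paper closes it with a per-term bound $e^{-c_1|\mb n|L}$, which comes from $\Sf f_{L,i}(z^-)/\Sf f_{L,i}(z^+)\le e^{-\tau\Sf J(\ell)L/2}$, and a geometric series in $n_1$.
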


\subsubsection{Conjecture on conditional fluctuations}

The last result of the paper is the conjecture on conditional fluctuations in line-to-point exponential LPP. Compare it with unconditional line-to-point multi-point fluctuations \eqref{eq:multi-point} again, and conjecture on conditional point-to-point fluctuations \cite[Conjecture~1.6]{BCT}.

We retain the notation \(D\), \(M\), \(m\),
\(\mathcal R_1\), \(\mathcal R_2\), and \(h^{\mathrm{flat}}\) from
 Conjecture~\ref{conj:lln}, and $\sigma$ from  Theorem~\ref{thm:cond-diag-fluc}. 

Let \(\mathcal R_i^\circ\) denote the interior of \(\mathcal R_i\).
We restrict the conjecture to the interiors of the relevant regions,
since the interface curves correspond to a competition between two
contributions of the same first-order size. On these curves, the
competition is resolved at the level of the second-order fluctuations
and should therefore produce separate transition laws.

\begin{conjecture}[Conditional flat fluctuations]
\label{conj:flat-fluctuations}
Fix \(a>0\) and \(\ell>4a\), and condition on
\[
    \mathcal L^{\mathrm{flat}}(aN,aN)=\ell N
\]
in the sense of \eqref{eq:flat-conditioning-convention}. The following
statements are expected to hold.

\begin{enumerate}
\item[(a)] For every fixed \((x,y)\in\mathcal R_1^\circ\),
\begin{equation}
    \frac{
        \mathcal L^{\mathrm{flat}}(xN,yN)
        -
        h^{\mathrm{flat}}(x,y)N
    }{
        \bigl((x-a)(y-a)\bigr)^{-1/6}
        \bigl(\sqrt{x-a}+\sqrt{y-a}\bigr)^{4/3}
        N^{1/3}
    }
    \xrightarrow{d}
    \TW_2 .
    \label{eq:flat-end-fluctuation-conjecture}
\end{equation}

\item[(b)] For every fixed \((x,y)\in
\mathcal R_2^\circ\cap\{(x,y):x\neq y\}\), define the maximizing attachment time $t_c$ by
\begin{equation}
    t_c=t_c(x,y)
    =
    \begin{cases}
        \displaystyle
        \frac{my-x}{a(m-1)},
        & \quad y<x,\\[8pt]
        \displaystyle
        \frac{mx-y}{a(m-1)},
        & \quad y>x,
    \end{cases}
    \label{eq:flat-off-diagonal-tc}
\end{equation}
then

\begin{equation}
    \frac{
        \mathcal L^{\mathrm{flat}}(xN,yN)
        -
        h^{\mathrm{flat}}(x,y)N
    }{
        \sqrt{2}\,\sigma N^{1/2}
    }
    \xrightarrow{f.d.d.}
    \begin{cases}
        \displaystyle
        \B_+(t_c)
        -
        \frac{1-t_c}{\sqrt{2}}Z,
        & y<x,\\[10pt]
        \displaystyle
        \B_-(t_c)
        +
        \frac{1-t_c}{\sqrt{2}}Z,
        & y>x.
    \end{cases}
     \label{eq:flat-middle-fluctuation-conjecture}
\end{equation}
Here, \(\B_+\), \(\B_-\) are two independent standard Brownian bridges, and \(Z\) is the standard normal random variable. Moreover, they are the same random objects as in
Theorem~\ref{thm:cond-diag-fluc}. The fluctuations on the diagonal portion
of \(\mathcal R_2\) are already described by that theorem.

\item[(c)] For every fixed
$
    (x,y)\in
    \mathbb R_+^2
    \setminus
    \overline{\mathcal R_1\cup\mathcal R_2},
$
\begin{equation}
    \frac{
        \mathcal L^{\mathrm{flat}}(xN,yN)
        -
        h^{\mathrm{flat}}(x,y)N
    }{
        2^{1/3}(x+y)^{1/3}N^{1/3}
    }
    \xrightarrow{d}
    \TW_1 .
    \label{eq:flat-unaffected-fluctuation-conjecture}
\end{equation}
\end{enumerate}
\end{conjecture}

\subsection{Outline of the paper}

The paper is organized as follows. In Section \ref{sec:flat-conditional-lln} and \ref{sec:flat-conditional-fluctuation-heuristic}, we present the heuristic argument for Conjecture~\ref{conj:lln} and Conjecture~\ref{conj:flat-fluctuations}, where the reasoning is similar to that in \cite{BCT} for point-to-point case. In Section \ref{sec:multi-point-formula}, we present explicit
formulas for the multi-time conditional distributions. In Section \ref{sec:diag-fluc}, we give the proof of Theorem~\ref{thm:cond-diag-fluc}. The proof is based on analysis of explicit multi-point distribution formulas for the exponential LPP by Liu in \cite{liu2021}. It is further adapted from the proof in \cite{liu2022} for the KPZ fixed point with flat initial condition.

\subsection*{Acknowledgments}

This work was carried out as part of the 2026 University of Michigan Mathematics REU. 
The authors are grateful to Professor Jinho Baik for his guidance and mentorship 
throughout the project, and to Tian Dong for valuable discussions and assistance.

\section{Conditional Law of Large Numbers Heuristics}
\label{sec:flat-conditional-lln}
Without loss of generality, we can assume that the conditioning point has equal coordinates for the line-to-point model.  Given a conditioning point
$(aN,bN)$, set the change of coordinate $\tau$ as
\begin{equation*}
    \tau:(x,y) \mapsto (x-\frac{a-b}{2},y+\frac{a-b}{2}).
\end{equation*}
Then $\tau(a,b)=(\frac{a+b}{2},\frac{a+b}{2})$,
and $\tau$ preserves the anti-diagonal since 
$\tau(r,-r)=(r-\frac{a-b}{2},-(r-\frac{a-b}{2}))\in\mathbb L.$
Since the exponential environment is translation invariant, the flat problem conditioned at
$(aN,bN)$ is reduced to the problem conditioned at $(\frac{a+b}{2}N,\frac{a+b}{2}N)$.
Thus, it is enough to analyze a diagonal conditioning point
$(aN,aN)$.

We next give a heuristic argument for Conjecture~\ref{conj:lln}.

\subsection{The favorable-path ansatz}

The unconditional flat variational problem has a unique macroscopic
optimizer at the orthogonal projection of the endpoint onto
\(\mathbb L\).  In particular, the maximizing path to
\((aN,aN)\) begins within a submacroscopic distance of the origin.
We therefore make the assumption that the upper-tail conditioning creates
an atypically favorable path running approximately along the
diagonal from $(0,0)$ to $(aN,aN)$.  Assuming that the large passage time is distributed approximately uniformly along this
diagonal line, we expect $\mathcal L^{\mathrm{flat}}(taN,taN)
    \approx t\ell N$ for $t\in [0,1]$.

\subsection{The backbone contribution}

Once this favorable path from the origin is assumed, we may follow the
same heuristic argument as in \cite[Section~2.1]{BCT}.  For a general
endpoint $(xN,yN)$, a path affected by the conditioning first follows
the favorable path up to $(taN,taN)$ and then travels typically from
$(taN,taN)$ to $(xN,yN)$.  Its predicted passage time, divided by $N$,
is
\begin{equation*}
    H(t)
    =
    t\ell+
    \left(\sqrt{x-at}+\sqrt{y-at}\right)^2,
    \qquad
    0\leq t\leq
    \min\left\{\frac xa,\frac ya,1\right\}.
\end{equation*}
Thus the contribution of paths using the favorable path is
\[
    h(x,y)
    :=
    \sup_{0\leq t\leq\min\{x/a,y/a,1\}}
    H(t).
\]
This is precisely the variational problem considered in
\cite[Section~2.1]{BCT}, specialized to a diagonal conditioning point. We introduce the optimizer regions:
\begin{align*}
    \Omega_1
    &=
    \left\{
        (x,y)\in(a,\infty)^2:
        \frac1m<\frac{y-a}{x-a}<m
    \right\},
\\
    \Omega_2
    &=
    \left\{
        (x,y)\in(0,\infty)^2:
        \frac1m<\frac yx<m
    \right\},
\end{align*}
and their optimization for point-to-point case gives
\begin{equation}
    h(x,y)
    =
    \begin{cases}
        \displaystyle
        \ell+
        \left(\sqrt{x-a}+\sqrt{y-a}\right)^2,
        & (x,y)\in\Omega_1,\\[8pt]
        \displaystyle
        \frac{\ell}{2a}(x+y)
        -
        \frac{\sqrt D}{2a}|x-y|,
        & (x,y)\in\Omega_2\setminus\Omega_1,\\[8pt]
        \displaystyle
        \left(\sqrt{x}+\sqrt{y}\right)^2,
        & (x,y)\notin\Omega_2.
    \end{cases}
    \label{eq:flat-optimized-backbone-profile}
\end{equation}

\subsection{Competition with the unconditional flat profile}

The only additional feature in the flat model is that when it does not use the conditioned favorable path, it has a different maximizing path
to $(xN,yN)$.  Instead of starting from the origin, it starts from the unique macroscopic optimizing point at the orthogonal projection of $(xN,yN)$ onto $\mathbb{L}$. Such paths contribute the ordinary flat
shape $\bar{\mathcal L}^{\mathrm{flat}}(x,y)=2(x+y)$.  We therefore
predict
\begin{equation}
    h^{\mathrm{flat}}(x,y)
    =
    \max\left\{
       h(x,y),\,2(x+y)
    \right\}.
    \label{eq:flat-complete-variational-prediction}
\end{equation}
In other words, the backbone contribution is exactly the one obtained
from the heuristic of \cite[Section~2.1]{BCT}; the outer maximum with the
unconditional flat shape is the only difference.

It remains to determine where the backbone contribution dominates the
unconditional flat shape.  Outside $\Omega_2$, the last case of
\eqref{eq:flat-optimized-backbone-profile} satisfies
\[
    \left(\sqrt{x}+\sqrt{y}\right)^2\leq2(x+y),
\]
so the conditioning has no first-order effect there.  In $\Omega_1$,
\begin{align*}
    &\ell+
    \left(\sqrt{x-a}+\sqrt{y-a}\right)^2
    \geq2(x+y) \notag\\
    &\hspace{20mm}\Longleftrightarrow\quad
    \left|\sqrt{x-a}-\sqrt{y-a}\right|
    \leq\sqrt{\ell-4a},
\end{align*}
which is precisely the condition defining $\mathcal R_1$.  In
$\Omega_2\setminus\Omega_1$,
\begin{align*}
    \frac{\ell}{2a}(x+y)
    -
    \frac{\sqrt D}{2a}|x-y|
    \geq2(x+y)
    &\Longleftrightarrow
    \frac{|x-y|}{x+y}
    \leq
    \sqrt{\frac{\ell-4a}{\ell}} \notag\\
    &\Longleftrightarrow
    \frac1M\leq\frac yx\leq M,
\end{align*}
which is the condition defining $\mathcal R_2$.  Substituting these
comparisons into \eqref{eq:flat-complete-variational-prediction} yields
the piecewise function stated in Conjecture~\ref{conj:lln}, which completes the heuristic argument for Conjecture~\ref{conj:lln}.

\section{Conditional Multi-Point Distributions}
\label{sec:multi-point-formula}
We prove Theorem \ref{thm:cond-diag-fluc} by computing the limit of an explicit conditional multi-point distribution. Unless otherwise noted, we use the same formulae, strategy, and notation as described in \cite[Section~3]{BCT}.

We will now repeat the notation from \cite{BCT}, often verbatim, for clarity.
\subsection{Notation}
\label{sec:multipoint-notation}
\phantomsection\label{not:cauchy-determinant}
Let $\mathbf r = (r_1 ,\dots r_n)$ and $\mathbf s = (s_1, \dots, s_n)$ be vectors in $\C^n$. We define \[
K_n(\mathbf r | \mb s) = \det\lt[\frac{1}{r_i - s_j}\rt]_{i,j= 1}^n = \frac{\prod_{1 \leq i < j \leq n} (r_i - r_j) (s_j - s_i)}{\prod_{i,j=1 }(r_i-s_j)}
\]
as the Cauchy determinant of $\mb r, \mb s$.\footnote{The conversion
from Liu's Vandermonde convention to this Cauchy-determinant convention
is given in Appendix~\ref{app:nc-cauchy}.}\phantomsection\label{not:terminal-sum} Define \[
S_n(\mb r | \mb s) = \sum_{i = 1}^n (r_i - s_i).\footnote{The factor $S_n$ arises when differentiating the exponential weights;
see \eqref{eq:app-nc-derivative}.}
\]

For $\mb n  = (n_1, \dots, n_m) \in \N^m$, define, as in \cite{BCT} (3.5), the rational function \begin{equation}
\label{eq:Pi-main}
        \Pi_{\mb n}(\bs \xi, \bs \eta) 
        = K_{n_1}(\bs \eta^{1}|\bs \xi^{1}) \left[ \prod_{i=1}^{m-1} K_{n_i+n_{i+1}}(\bs \xi^{i}, \bs \eta^{i+1}|\bs \eta^{i}, \bs \xi^{i+1}) \right] K_{n_m}(\bs \xi^{m}|\bs \eta^{m}) S_{n_m}(\bs \xi^{m}|\bs \eta^{m}),
\end{equation} 
\phantomsection\label{not:integration-variables}where $\bs \xi = (\bs \xi^1, \dots, \bs \xi^m)$, and $\bs \eta = (\bs \eta^1, \dots, \bs \eta^m)$, with each component $\bs \xi^i, \bs \eta^i \in \C^{n_i}$.\footnote{Their correspondence with Liu's variables is listed in
Appendix~\ref{app:nc-parameters}, and the product of Vandermonde factors
leading to $\Pi_{\mb n}$ is identified in \eqref{eq:app-nc-vandermonde}.}

\phantomsection\label{not:exponential-weight}
For $M,N\in \N$ and $T\in \R_+$, define the function\footnote{Throughout,  $\log$ denotes the branch that is analytic in $\C\setminus i \overline{ \R_-}$ and such that $\log 1=0$.}  \[ 
    f_{M,N,T}(z) := \frac{z^N e^{Tz}}{(z+1)^M} = e^{ N\log z - M\log(z+1) + Tz}.\]
For $\mb M = (M_1,\dots, M_m)\in \N^m$, $\mb N = (N_1,\dots, N_m)\in \N^m$, and  $\mb T = (T_1,\dots, T_m)\in \R_+^m$,  define
\begin{equation}
\label{eq:weight-product-main}
    \Sf F^{(\mb n)}_{\mb M, \mb N, \mb T} (\bs \xi, \bs \eta) 
    =  \prod_{i=1}^m \prod_{k_i=1}^{n_i} \frac{\Sf f_i(\xi_{k_i}^{i})}{\Sf f_i(\eta_{k_i}^{i})}, 
    \qquad \Sf f_i(z)= \frac{f_{M_i, N_i,T_i}(z)}{f_{M_{i-1},N_{i-1},T_{i-1}}(z)}.
\end{equation}
where $\bs \xi$,  $\bs \eta$ are as before.  We set $M_0=N_0=T_0=0$ in the above formula, then $f_{M_0, N_0,T_0}(z) = 1$.\footnote{The functions $f_{M_i,N_i,T_i}$ and $\Sf f_i$ correspond to Liu's
$F_i$ and his side-dependent $f_i$ as explained in
\eqref{eq:app-nc-weights}--\eqref{eq:app-nc-F}.} 

\phantomsection\label{not:multipoint-contours}
Let  
\[C^{\text{in}}_{m,L},\dots,C^{\text{in}}_{2,L}, C_{1,L}, C^{\text{out}}_{2,L}, \dots, C^{\text{out}}_{m,L}
\]
be $2m-1$ small circles, nested inside to outside, enclosing the point $-1$.  Similarly, let 
\[
    C^{\text{in}}_{m,R},\dots, C^{\text{in}}_{2,R}, C_{1,R}, C^{\text{out}}_{2,R}, \dots, C^{\text{out}}_{m,R}
\]
be $2m-1$ small circles, nested inside to outside, enclosing the point $0$ and disjoint from the previous circles. \footnote{Unlike in \cite{BCT}, our formulae often have an additional consideration that will require either the condition $-1 - C_{1,R} = C_{1,L}$, or a similar condition for a modified set of contours. The delta-kernel interpretation is explained in
Appendix~\ref{app:nc-flat-factor}; the conversion from Liu's contour
measures is given in Appendix~\ref{app:nc-measures}.
} All closed contours in this paper are oriented counterclockwise, unless otherwise noted. The orientations for infinite contours will be given as they appear.

We reproduce Figure 8 in \cite{BCT} as Figure \ref{fig:allContours} below. These are the contours for the case when $m=2$.

\begin{figure}[h]
\begin{center}
\begin{tikzpicture}[scale=1.3]
    % Draw x-axis with arrows on both ends
    \draw[->] (-3.5,0) -- (1.5,0); %node[right] {$\mathbb{R}$};
    % Draw y-axis with arrows on both ends
    \draw[->] (0,-1.25) -- (0,1.25); % node[above] {$i\mathbb{R}$};
    % Draw circle at (-1, 0) with radius = 1.5
    \node[below] at (-2,0) {$-1$};  % Label -1 on the x-axis       
    \filldraw[black] (-2,0) circle (1pt);
    \draw[thick] (-2,0) circle [radius=0.45];
    \draw[thick] (-2,0) circle [radius=0.65];
    \draw[thick] (-2,0) circle [radius=0.85];
    \node[below] at (0,0) {$0$};    % Label 0 on the x-axis
    \filldraw (0,0) circle (1pt);
    \draw[thick] (0,0) circle [radius=0.45];
    \draw[thick] (0,0) circle [radius=0.65];
    \draw[thick] (0,0) circle [radius=0.85];
\end{tikzpicture}
\end{center}
\caption{Contours for $m = 2$: The three circles on the left are $C^{\text{in}}_{2,L}$, $C_{1,L}$, $C^{\text{out}}_{2,L}$, and the three circles on the right are $C^{\text{in}}_{2,R}$, $C_{1,R}$, $C^{\text{out}}_{2,R}$. Each list is ordered from inside to outside.}
\label{fig:allContours}
\end{figure}

Let $C$ be a generic embedded curve in $\C$.

\phantomsection\label{not:delta-kernel}We define the delta kernel $\delta(x,y)$ on $C$ by the relation\[\int_{C} \delta(x,y) g(x) \frac{dx}{2\pi i} = g(y)\]
for any function $g \in L^2\big(C, \frac{dx}{2\pi \mathrm{i}}\big)$ and any $y \in C$. 

\phantomsection\label{not:delta-determinant}Let $\mb r, \mb s$ be vectors of dimension $m$. Then, we define a determinant of the above Dirac $\delta$-functions in a formal way by the Leibniz formula: \[\det[\delta(\mb r, \mb s)] = \det [\delta(r_i, s_j)]_{i,j= 1}^m = \sum_{\sigma \in S^m} \operatorname{sgn}(\sigma) \prod_{i = 1}^m \delta(r_{\sigma(i)}, s_i).\footnote{For the specialization to $\delta(-\eta_r^1-1,\xi_s^1)$ and its role
in Liu's flat initial-condition factor, see
Appendix~\ref{app:nc-flat-factor}.}\]

\subsection{Formula for Conditional Multi-Point Distributions}

In Proposition 2.3 of \cite{liu2021}, the authors present an explicit conditional multi-point distribution including both step and flat cases of TASEP. The authors of \cite{BCT} considered the case $I = \{1, \dots , m -1\}$ of this proposition and translated this result to exponential DLPP for the step initial condition using the well-known relationship (3.1) in \cite{BCT}.

We make the same translation for the flat case, using the parameter
correspondence in \eqref{eq:app-nc-tasep}. The computations for the flat
and step cases are given in Appendices~\ref{app:notation-flat}
and~\ref{app:notation-step}, respectively.

Before presenting the formula for the conditional multi-point distribution,
we define the following contour integral. The inner and outer contour
coefficients follow from \eqref{eq:app-nc-measure}. \begin{multline}
\label{eq:Dnflat}
 {\Sf D}^{(\mb n),\textup{flat}}_{\mb M, \mb N, \mb T}(z_1, \dots, z_{m-1}) = \frac{(-1)^{n_1}}{(2\pi i )^{2|\mb n|}} \prod_{i = 2}^m \prod_{k_i = 1}^{n_i} \lt[\int_{C_{i,L}^{in}} \dd\xi_{k_i}^i + z_{i-1} \int_{C_{i,L}^{out}} \dd\xi_{k_i}^i\rt]\lt[\int_{C_{i,R}^{in}} \dd\eta_{k_i}^i + z_{i-1} \int_{C_{i,R}^{out}} \dd\eta_{k_i}^i\rt] \\ 
 \times \prod_{k_1 = 1}^{n_1}\lt[\int_{C_{1,L}}\dd \xi_{k_1}^1\rt]\lt[\int_{C_{1,R}} \dd\eta_{k_1}^1\rt] \frac{\Pi_{\mb n}(\bs \xi, \bs \eta)}{K_{n_1}(\bs \xi^{1} | \bs \eta^{1})} \Sf F_{\mb M, \mb N, \mb T}^{(\mb n)} (\bs \xi, \bs \eta) \det \lt[\delta(-\eta_r^1 - 1, \xi_s^1)\rt]_{r,s =1}^{n_1}.
\end{multline}
\textit{Remark 1:} Unlike in \cite{BCT}, the radii of the contours are specifically chosen such that $C_{1,L} = -1 - C_{1,R}$.  

\textit{Remark 2:} ${\Sf D}^{(\mb n),\textup{flat}}_{\mb M, \mb N, \mb T}$ differs from $\Sf D_{\mb M, \mb N, \mb T}^{(\mb n)}$ by the $\delta$-function, as well as by a factor of $(-1)^{n_1}$, and the presence in the integrand of $\frac{1}{K_{n_1}(\bs \xi^1 | \bs \eta ^1)}$. These three changes are precisely the factor in
\eqref{eq:app-nc-flat-factor}.

Now we may state our analogue to Proposition 3.1 in \cite{BCT}.

\begin{prop}
\label{prop:multipointformula}
Consider the line-to-point exponential DLPP as in Definition~\ref{def:flat-explpp}. Fix an integer $m\ge 2$. 
Let $\mb M = (M_1,\cdots, M_m)\in \N^m$, $\mb N = (N_1,\cdots, N_m)\in \N^m$, and $\mb T = (T_1,\cdots, T_m)\in \R_+^m$. 
Assume that $0< T_1 \leq \cdots \leq T_m$  and $(N_1,T_1), \cdots, (N_m,T_m)$ are all distinct.  
Then, 
\begin{equation} 
\label{eq:conditional-ratio-main}
   	\p( \mathcal L^{\mathrm{flat}}(M_1,N_1) > T_1,\cdots , \mathcal L^{\mathrm{flat}}(M_{m-1},N_{m-1}) > T_{m-1}| \mathcal L^{\mathrm{flat}}(M_m,N_m) = T_m) 
    	= \frac{ \Sf Q^{\textup{flat}}_m(\mb M, \mb N, \mb T)}{\Sf Q^{\textup{flat}}_1(M_m, N_m, T_m)},
\end{equation}
where 
\begin{equation}
\label{eq:Qmflat-main}
    	\Sf Q^{\textup{flat}}_m(\mb M, \mb N, \mb T) = \sum_{\mb n\in\N^m}\frac{1}{(\mb n!)^2} \Sf Q^{(\mb n), \textup{flat}}_m(\mb M, \mb N,\mb T),
\end{equation}
with
\begin{equation} \label{eq:Qnflat}  
    	\Sf Q^{(\mb n), \textup{flat}}_m(\mb M, \mb N,\mb T) = \frac{(-1)^{|\mb n|+m-1}}{(2\pi \text{i})^{m-1}} \oint_{>1}\cdots \oint_{>1} 
   	\Sf D^{(\mb n),\textup{flat}}_{\mb M, \mb N, \mb T}(\mb z)  \prod_{i=1}^{m-1} \frac{(z_i+1)^{n_i-n_{i+1}-1}}{z_i^{n_{i+1}+1}} \dd z_i. 
\end{equation}
The function $\Sf D^{(\mb n),\textit{flat}}_{\mb M, \mb N, \mb T}(\mb z)$ is defined in \eqref{eq:Dnflat}, and the contours are circles centered at the origin with  radii greater than $1$.\footnote{The auxiliary-variable powers in \eqref{eq:Qnflat} are derived in
\eqref{eq:app-nc-D-conversion}--\eqref{eq:app-nc-mixed};
the density interpretation of $\Sf Q_m^{\mathrm{flat}}$, the restriction
to $\N^m$, and the one-point denominator are explained in
Appendix~\ref{app:nc-conditioning}.}
\end{prop}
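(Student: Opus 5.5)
The plan is to derive Proposition~\ref{prop:multipointformula} from Liu's multi-time formula for TASEP with flat initial condition \cite[Proposition~2.3]{liu2021}, following the step-case translation in \cite[Section~3]{BCT}. We proceed in three steps.

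\textbf{Step 1: translate LPP events into TASEP events.} Use the standard coupling in which $\mathcal L^{\mathrm{flat}}(M,N)$ is the time at which a prescribed particle of flat TASEP makes its $M$-th (equivalently $N$-th) jump. Under the correspondence \eqref{eq:app-nc-tasep}, the event $\{\mathcal L^{\mathrm{flat}}(M_i,N_i)>T_i\}$ becomes a statement about the position $x_{k_i}(T_i)$ of a single particle at time $T_i$: it has not yet passed a certain site. The hypothesis $T_1\le\cdots\le T_m$ with distinct pairs $(N_i,T_i)$ is exactly what Liu's theorem requires for ordered times.

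\textbf{Step 2: apply Liu's formula and rewrite it.} Take the case $I=\{1,\dots,m-1\}$ of Liu's joint distribution formula, treating the last event as an inequality event. Then rewrite its ingredients in the notation of Section~\ref{sec:multipoint-notation}:
\begin{itemize}
\item Liu's Vandermonde products become the Cauchy determinants in $\Pi_{\mb n}$ (Appendix~\ref{app:nc-cauchy}, \eqref{eq:app-nc-vandermonde}).
\item His functions $F_i,f_i$ become $f_{M_i,N_i,T_i}$ and $\Sf f_i$ (\eqref{eq:app-nc-weights}--\eqref{eq:app-nc-F}).
\item His weighted contour measures become the bracketed combinations $\int_{\mathrm{in}}+z_{i-1}\int_{\mathrm{out}}$ (\eqref{eq:app-nc-measure}).
\item The auxiliary $z$-integrals produce the powers $(z_i+1)^{n_i-n_{i+1}-1}z_i^{-n_{i+1}-1}$ (\eqref{eq:app-nc-D-conversion}--\eqref{eq:app-nc-mixed}).
\end{itemize}

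The genuinely new ingredient relative to \cite{BCT} is the flat initial-condition factor at the first level. In Liu's formula this is a determinant of the form $\det[\delta(\eta^1_r+\xi^1_s+1)]$, up to sign, paired with the level-one Cauchy factor. We rewrite it as $(-1)^{n_1}K_{n_1}(\bs\xi^1|\bs\eta^1)^{-1}\det[\delta(-\eta^1_r-1,\xi^1_s)]$, which is \eqref{eq:app-nc-flat-factor}. For $\delta(-\eta-1,\xi)$ to be a genuine delta kernel on $C_{1,L}$ when $\eta\in C_{1,R}$, the map $\eta\mapsto-1-\eta$ must carry $C_{1,R}$ onto $C_{1,L}$. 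This forces the radius choice in Remark~1, and we must check that this choice still lets $C_{1,L}$ sit correctly in the nested order of the left and right contour families.

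\textbf{Step 3: pass from the joint probability to the conditional one.} Following the convention \eqref{eq:flat-conditioning-convention}, differentiate the joint probability in $T_m$. Differentiating $\Sf F$ with respect to $T_m$ brings down the factor $\sum_k(\xi^m_k-\eta^m_k)=S_{n_m}$ (\eqref{eq:app-nc-derivative}), which accounts for its appearance in $\Pi_{\mb n}$. The terms with $n_m=0$ vanish after differentiation; this is why the sum in \eqref{eq:Qmflat-main} runs over $\N^m$ rather than $\N_0^m$.

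The denominator is the same computation with $m=1$, that is, the density of $\mathcal L^{\mathrm{flat}}(M_m,N_m)$ at $T_m$. The ratio then gives \eqref{eq:conditional-ratio-main}. Interchanging $\partial_{T_m}$ with the series and the contour integrals is justified by the absolute convergence Liu establishes, which is uniform for $T$ in compact sets.

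\textbf{Main obstacle.} The difficult part is the bookkeeping of signs and orientations. This covers the $(-1)^{|\mb n|+m-1}$ prefactor, the $(-1)^{n_1}$ factor, and the orientation reversal in $\eta\mapsto-1-\eta$ inside the delta kernel. My first approach would be to verify the conventions on the case $m=1$, $n_1=1$, comparing directly with the known one-point density of $\mathcal L^{\mathrm{flat}}$. I would then fix the remaining signs by matching the step-case formula of \cite{BCT} at levels $i\ge2$, where the two formulas agree.
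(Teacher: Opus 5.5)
Your overall route is the paper's: Liu's mixed-event formula with $I=\{1,\dots,m-1\}$, translated as in \cite{BCT}; the flat factor rewritten as in \eqref{eq:app-nc-flat-factor}; then $\partial_{T_m}$ to produce $S_{n_m}$, and division by the $m=1$ density. However, Step~3 has a gap. Differentiation only kills the terms with $n_m=0$, since $S_0=0$. That leaves a sum over $\{\mb n\in\N_0^m : n_m\ge 1\}$, not over $\N^m$. Multi-indices such as $\mb n=(0,1)$ for $m=2$ have $S_{n_m}\neq 0$, and nothing you have said removes them. So your claim that the vanishing of the $n_m=0$ terms is ``why'' the sum runs over $\N^m$ is incorrect; without a further argument you have not proved \eqref{eq:Qmflat-main} as stated.

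The missing ingredient is the $z$-integral argument of \cite[Lemma~3.2]{BCT}, which the paper uses in Appendix~\ref{app:nc-conditioning}.
\begin{itemize}
\item If $n_m\ge 1$ and some component vanishes, pick $i<m$ with $n_i=0<n_{i+1}$.
\item The variable $z_i$ enters $\Sf D^{(\mb n),\mathrm{flat}}_{\mb M,\mb N,\mb T}(\mb z)$ only through the weight $z_i$ on the outer contours of the $2n_{i+1}$ level-$(i+1)$ integrations. Hence it is a polynomial of degree at most $2n_{i+1}$ in $z_i$.
\item The explicit factor in \eqref{eq:Qnflat} is then $(z_i+1)^{-n_{i+1}-1}z_i^{-n_{i+1}-1}$.
\item The $z_i$-integrand is therefore $O(z_i^{-2})$ at infinity, and its only finite poles, $0$ and $-1$, lie inside the circle of radius greater than $1$. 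So its integral vanishes.
\end{itemize}
This carries over unchanged from the step case because the level-one flat factor does not depend on $\mb z$.

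Two smaller corrections:
\begin{itemize}
\item The map $\eta\mapsto -1-\eta$ is a rotation by $\pi$ about $-1/2$. It carries the counterclockwise $C_{1,R}$ onto the counterclockwise $C_{1,L}$, so there is no orientation reversal to track. With the stated delta-kernel normalization, the evaluation introduces no sign or Jacobian.
\item Liu's absolute convergence concerns the undifferentiated series, so it does not by itself justify termwise $\partial_{T_m}$. After expanding the delta determinant, you also need that $|S_{n_m}|\le C n_m$ on the fixed contours, so that this factor is absorbed by the Hadamard/factorial majorants.
\end{itemize}
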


\begin{proof}
The proof is nearly verbatim to that of Proposition 3.1 in \cite{BCT}. The only difference is that when applying Proposition 2.3 from \cite{liu2021}, instead of substituting and rearranging $\mathcal D_{Y_{\text{step}}}$, we use $\mathcal D_{Y_{\textup{flat}}}$ given in (2.19) in \cite{liu2021}. This translation is detailed in Appendix~\ref{app:notation-flat},
with the step comparison in Appendix~\ref{app:notation-step}.
In particular, \eqref{eq:app-nc-mixed} gives \begin{multline*}
 \p( \mathcal L^{\mathrm{flat}}(M_1,N_1) > T_1,..., \mathcal L^{\mathrm{flat}}(M_{m-1},N_{m-1}) > T_{m-1},\mathcal L^{\mathrm{flat}}(M_m,N_m) \leq T_m) \\  = \frac{1}{(2\pi \text{i})^{m-1}} 
    \sum_{\mb n\in\N_0^m}\frac{(-1)^{|\mb n|+m-1}}{(\mb n!)^2} 
    \oint_{>1}\cdots \oint_{>1} \wt{\Sf D}^{(\mb n), \textup{flat}}_{\mb M, \mb N, \mb T}(\mb z) 
    \prod_{i=1}^{m-1} \frac{(z_i+1)^{n_i-n_{i+1}-1}}{z_i^{n_{i+1}+1}} \dd z_i, 
\end{multline*}\phantomsection\label{not:undifferentiated-flat}
where $\wt{\Sf D}^{(\mb n), \textup{flat}}_{\mb M, \mb N, \mb T}(\mb z)$ is the same as ${\Sf D}^{(\mb n), \textup{flat}}_{\mb M, \mb N, \mb T}(\mb z)$, except that $\Pi_{\mb n}$ is replaced by $\wt \Pi_{\mb n}$, which is the same expression as $\Pi_{\mb n}$ but without the $S_{n_m}(\bs \xi^m, \bs \eta^m)$ term. The explicit definitions are \eqref{eq:app-nc-Pitilde} and
\eqref{eq:app-nc-Dtilde}. Differentiating as in
\eqref{eq:app-nc-D-derivative} and removing zero components of $\mb n$
as in Appendix~\ref{app:nc-conditioning} completes the proof.
\end{proof}

\subsection{Formula for a Special Case}
The special case  when $\mb n = (1, \dots, 1): = \mb 1$ will be useful when we consider the fluctuations along the diagonal in the proof of Theorem \ref{thm:cond-diag-fluc} in Section \ref{sec:diag-fluc}. This result parallels that of Lemma 3.3 in \cite{BCT}.

\begin{lemma}
We have  
    \begin{equation}{\label{eq:Q11}}
        \Sf Q^{(\mb 1), \textup{flat}}_{m}(\mb M, \mb N, \mb T)  = \int_{\vec \gamma} \frac{\dd \bs \xi}{(2\pi i)^{m}} \int_{\vec \Gamma} \frac{\dd \bs \eta}{(2\pi i)^{m}}\; \frac{\Pi_{\mb 1}(\bs \xi, \bs \eta)}{K_{1}( \xi^{1} | \eta^{1})} \Sf F_{\mb M, \mb N, \mb T}^{(\mb 1)} (\bs \xi, \bs \eta) \delta(-\eta^1 - 1, \xi^1),
    \end{equation}
    where $\bs\xi= (\xi^1, \cdots, \xi^m) \in \C^m$ and $\bs\eta= (\eta^1, \cdots, \eta^m) \in \C^m$. The contours are 
\begin{equation}
    \vec{\gamma} = \gamma_1\times \cdots \times \gamma_m, 
    \qquad
    \vec{\Gamma} = \Gamma_1\times \cdots\times \Gamma_m, 
\end{equation}
where $\gamma_1, \cdots, \gamma_m$ are small circles around the point $z=-1$, nested from inside to outside, and $\Gamma_1, \cdots, \Gamma_m$ are small circles around the point $z=0$, also nested from inside to outside, such that all circles are mutually disjoint. Furthermore, we require that $\gamma_1 = -1 - \Gamma_1$.
    
\end{lemma}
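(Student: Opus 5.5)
We specialize \eqref{eq:Qnflat} to $\mb n=\mb 1$ and carry out the $z_i$-integrals explicitly, in the same way as the proof of Lemma 3.3 in \cite{BCT}. When $n_i=n_{i+1}=1$, the auxiliary factor in \eqref{eq:Qnflat} becomes
\[
\frac{(z_i+1)^{n_i-n_{i+1}-1}}{z_i^{n_{i+1}+1}}=\frac{1}{(z_i+1)z_i^{2}} .
\]
For each $i\ge 2$ there is now a single pair $(\xi^i,\eta^i)$. In $\Sf D^{(\mb 1),\textup{flat}}_{\mb M,\mb N,\mb T}(\mb z)$ this pair carries the combined measure
\[
\Bigl[\int_{C^{in}_{i,L}}+z_{i-1}\int_{C^{out}_{i,L}}\Bigr]\Bigl[\int_{C^{in}_{i,R}}+z_{i-1}\int_{C^{out}_{i,R}}\Bigr].
\]
So the integrand depends on $z_{i-1}$ only through a polynomial of degree at most $2$ in $z_{i-1}$. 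Expanding, the dependence on $z_{i-1}$ is a sum of three terms: $1$ (in,in), $z_{i-1}$ (one inner, one outer), and $z_{i-1}^2$ (out,out).

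The key step is to evaluate
\[
\frac{-1}{2\pi i}\oint_{>1}\frac{z^k}{(z+1)z^2}\,dz \qquad\text{for } k=0,1,2 .
\]
The sign $-1$ is taken from the prefactor $(-1)^{|\mb n|+m-1}=(-1)^{2m-1}=-1$, distributed one factor of $-1$ per $z$-integral along with the $(2\pi i)^{-(m-1)}$. Since the contour encloses both $0$ and $-1$, we may compute by the residue at infinity.
\begin{itemize}
\item[] For $k=0$ and $k=1$ the integrand decays at least like $z^{-2}$, so the integral is $0$.
\item[] For $k=2$ the integrand is $1/(z+1)$, whose total residue is $1$.
\end{itemize}
Hence only the (out,out) term survives. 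For $i\ge 2$, $\xi^i$ lies on $C^{out}_{i,L}$ and $\eta^i$ on $C^{out}_{i,R}$. After the leftover signs cancel, what remains is $\Sf Q^{(\mb 1),\textup{flat}}_m$ with the $z$-integrals removed.

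It remains to relabel contours and collect signs. Set $\gamma_1=C_{1,L}$, $\Gamma_1=C_{1,R}$, and $\gamma_i=C^{out}_{i,L}$, $\Gamma_i=C^{out}_{i,R}$ for $i\ge2$. These are nested from inside to outside around $-1$ and $0$ respectively, are mutually disjoint, and inherit $\gamma_1=-1-\Gamma_1$ from Remark~1. The constant $(2\pi i)^{-2m}$ from \eqref{eq:Dnflat} distributes as $(2\pi i)^{-m}$ to the $\bs\xi$-integrals and $(2\pi i)^{-m}$ to the $\bs\eta$-integrals. For $n_1=1$ the determinant $\det[\delta(-\eta^1_r-1,\xi^1_s)]$ is just $\delta(-\eta^1-1,\xi^1)$. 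The factor $(-1)^{n_1}=-1$ must combine with the remaining sign from the $z$-integrals to give $+1$.

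I expect the sign bookkeeping to be the main obstacle. One must track the $(-1)^{|\mb n|+m-1}$ prefactor, the orientation of the $z$-residue at infinity, and the $(-1)^{n_1}$ flat factor, and check that they cancel. If a sign mismatch appears, I would recheck the orientation convention of the $\oint_{>1}$ contours against \cite{BCT} Lemma 3.3, whose step analogue has no $(-1)^{n_1}$. In that comparison, the residue computation there yields $-1$ per variable in our normalization, and the extra $(-1)^{n_1}$ is exactly what is needed to restore the stated sign.
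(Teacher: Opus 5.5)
Your route is the paper's. The paper just invokes the proof of Lemma~3.3 in \cite{BCT}, and that proof is your observation: for $\mb n=\mb 1$, the $\mb z$-dependence of $\Sf D^{(\mb 1),\textup{flat}}$ is a polynomial of degree at most two in each $z_{i-1}$, and only the (out,out) coefficient survives the $z_i$-integrals. Level $1$, where the $\delta$-coupling lives, carries no $z$, so the coupling does not interfere. Your contour relabeling, the reduction of the $\delta$-determinant to a single $\delta$, and the splitting of $(2\pi i)^{-2m}$ are all fine.

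The one thing that is wrong as written is the sign bookkeeping, which you flagged but left inconsistent.

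\begin{enumerate}
\item The prefactor $(-1)^{|\mb n|+m-1}=(-1)^{2m-1}=-1$ is a single overall sign. It cannot be distributed as one factor of $-1$ to each of the $m-1$ $z$-integrals.
\item Each integral $\frac{1}{2\pi i}\oint_{>1} z^{k}\,\frac{dz}{(z+1)z^{2}}$ equals $0$ for $k=0,1$ and $+1$ for $k=2$. So the $z$-integrals contribute no sign at all.
\item Hence $\Sf Q^{(\mb 1),\textup{flat}}_m$ is $-1$ times the (out,out) part of $\Sf D^{(\mb 1),\textup{flat}}$.
\item That part carries the prefactor $(-1)^{n_1}(2\pi i)^{-2m}=-(2\pi i)^{-2m}$, so the two minus signs cancel and give the stated $+$ sign.
\end{enumerate}

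If you follow your text literally ($-1$ per $z$-variable, then multiply by $(-1)^{n_1}$), you get $(-1)^{m-1}\cdot(-1)=(-1)^m$, which is wrong for odd $m$.

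Your closing comparison is off in the same way. In the step case, where $\Sf D^{(\mb n)}$ lacks the factor $(-1)^{n_1}$, the same computation yields one overall minus sign, not $-1$ per variable. That single sign is what the flat factor $(-1)^{n_1}$ removes, which is the content of the paper's Remark~1 after the lemma.
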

\begin{proof}
The proof method is identical to that of Lemma 3.3 in \cite{BCT} because the coupling induced by the $\delta$-function does not affect the order or total number of the $z_i$ terms. The only difference is that we have $\Sf D_{\mb M, \mb N, \mb T}^{(\mb n), \textup{flat}}$ instead of $\Sf D_{\mb M, \mb N, \mb T}^{(\mb n)}$. 
\end{proof}
\textit{Remark 1:} Comparing the result with Lemma 3.3 in \cite{BCT}, we see that our integrand matches the integrand of $\Sf D_{\mb M, \mb N, \mb T}^{(\mb n), \textup{flat}}$, and the aforementioned factor of $(-1)^{n_1}$ contributed a $-$ sign. The determinant of the $\delta$-kernels simplified to a single $\delta$-function.

\textit{Remark 2:} Unlike \cite{BCT}, we once again required  $\gamma_1 = -1 - \Gamma_1$ so that the $\delta$-function made sense.

\section{Proof of Theorem \ref{thm:cond-diag-fluc}}
\label{sec:diag-fluc}

Without loss of generality, we change coordinates such that the conditioned point is located on the diagonal, that is, we assume $a = b$. Also note throughout that we are using $L$ as the large parameter, whereas in the Section~\ref{sec:intro} we used $N$. 

To prove the theorem, we show that for every $m\ge 2$,
\begin{equation} \label{eq:diagpr}
    \p \left( \bigcap_{j=1}^{m-1} \left\{  \frac{ \mathcal L^{\mathrm{flat}}( t_j a L +  x_j\frac{a}{\sqrt{D}} \sigma  L^{1/2}, t_j aL -  x_j\frac{a}{\sqrt{D}} \sigma  L^{1/2})- t_j \ell L}{\sigma L^{1/2}}  > h_j \right\}\, \bigg| \, \mathcal L^{\mathrm{flat}}(aL, aL)= \ell L \right)  
\end{equation}
converges, as $L\to \infty$, to 
\begin{equation}
	\Sf P^{\textup{flat}}(\mb t, \mb x, \mb h):= \p\left( \bigcap_{j=1}^{m-1} \left\{ \mathbb B_1(t_j) - \left| \mathbb B_2(t_j)-(1-t_j)Z - x_j \right|  \} > h_j\right\} \right)
\end{equation}
for every 
\begin{equation} \label{eq:txhvecd}
    \mb t=(t_1, \cdots,t_{m-1})\in (0,1)^{m-1}, 
    \quad \mb x=(x_1, \cdots,x_{m-1})\in \R^{m-1}, 
    \quad \mb h=(h_1, \cdots,h_{m-1})\in \R^{m-1}. 
\end{equation}
 $\mathbb B_1$ and $\mathbb B_2$ are independent Brownian bridges and $Z \sim \mathcal N(0,1)$.

 Using the identity $\min(p,q)= \frac{p+q}{2}- \frac{|p-q|}{2}$, we may write $\Sf P^{\textup{flat}}(\mb t, \mb x, \mb h)$ in a form similar to that of \cite{BCT} as \begin{equation}
\p\lt( \bigcap_{j=1}^{m-1} \Big\{ \min\big\{ \sqrt 2 \mathbb B_+^{\mathrm{br}}(t_j) + (1-t_j)Z + x_j, \sqrt 2 \mathbb B^{\mathrm{br}}_-(t_j) - (1-t_j)Z - x_j \big\} > h_j \Big\} \rt).
 \end{equation}

Now fix $m \geq 2$, $\mb t, \mb x, \mb h$. We may apply Lemma 4.2 from \cite{BCT} successively\footnote{This is explained in more detail in \cite{BCT} Section 6.} so that we may assume \[
0 < t_1 < \cdots < t_{m-1} < 1.
\]
 To state the lemmas that prove the theorem, we must first reproduce some more notation from \cite{BCT} Section 6.1. 

\subsection{Notation}
Let \[\mb M_L = (M_{L,1}, \cdots, M_{L,{m}})\in \N^m, \qquad \mb N_L = (N_{L,1}, \cdots, N_{L,{m}})\in \N^m, \qquad \mb T_L = (T_{L,1}, \cdots, T_{L,{m}}) \in \R_+^m,\]
in which  \footnote{ $\lceil \; \cdot \; \rceil$ is the ceiling function.} 
\begin{equation}
\label{eq:defMLi}
    M_{L,i} = \bigg\lceil t_i  a L + x_i \frac{a\sigma}{ \sqrt{D}}  L^{1/2} \bigg \rceil, 
    \quad  N_{L, i} =  \bigg\lceil  t_i a L - x_i \frac{ a\sigma}{ \sqrt{D}} L^{1/2}  \bigg\rceil , 
    \quad T_{L, i} = t_i \ell L + h_i \sigma  L^{1/2}, 
\end{equation}
\[
    M_{L,m}= \lceil aL \rceil, \qquad N_{L,m}=\lceil aL \rceil, \qquad T_{L,m}= \ell L. 
\]
We also set $M_{L,0}=N_{L,0}=T_{L,0}=0$. \footnote{The formulae above differ slightly from \cite{BCT} Section 6 since we have been assuming, without loss of generality, $a= b$.}

For $L>0$, define the constant \begin{equation}
 \label{eq:cKdef}
    \Sf Z_L:= \left(\frac{\ell+\sqrt{D}} {\ell -\sqrt{D}}  \right)^{2\lceil aL \rceil} 
    e^{- \sqrt{D} L},
\end{equation}
where $D$ is from \eqref{eq:D}.
\subsection{Statement of Lemmas} 

 As discussed in \cite{BCT} Section 6, Proposition \ref{prop:multipointformula} implies that Theorem \ref{thm:cond-diag-fluc} is proven if we show that 
 \[\lim_{L \to \infty}\frac{ \Sf Q^{\textup{flat}}_m(\mb M_L, \mb N_L, \mb T_L)}{\Sf Q^{\textup{flat}}_1(M_{L,m}, N_{L,m}, T_{L,m})} = \Sf P^{\textup{flat}} (\bf t, \bf x, \bf h). \]
 To this end, we prove three similar lemmas to 6.1, 6.2, 6.3 in \cite{BCT}. 

The first lemma shows the $\mb n = \mb 1$ term dominates. 
\begin{lemma}
\label{lem:dominatingterm}
   \[
\lim_{L \to \infty} \frac{\sqrt{8\pi} \sigma L^{1/2}}{\Sf Z_L} \Sf Q^{(\mb 1), \textup{flat}}_m (\mb M_L, \mb N_L, \mb T_L) = \Sf P^{\textup{flat}}(\mb t, \mb x, \mb h).
\]
\end{lemma}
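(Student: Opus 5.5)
We start from \eqref{eq:Q11}. The $\delta$-function collapses the $\xi^1$ integral, so $\xi^1=-1-\eta^1$ with $\eta^1\in\Gamma_1$. This leaves a $(2m-1)$-fold contour integral in $\eta^1,\xi^2,\dots,\xi^m,\eta^2,\dots,\eta^m$.

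The integrand is a product of the weights $\Sf f_i(\xi^i)/\Sf f_i(\eta^i)$ and of the Cauchy factors of $\Pi_{\mb 1}$; with $\mb n=\mb 1$ these are ratios of simple linear terms. The $1/K_1(\xi^1|\eta^1)=\xi^1-\eta^1=-1-2\eta^1$ factor and $S_1(\xi^m|\eta^m)=\xi^m-\eta^m$ are also explicit. We write each weight as $\exp(L\,\Phi_i(\cdot)+L^{1/2}\Psi_i(\cdot)+O(1))$. Here $\Phi_i(z)=(t_i-t_{i-1})\bigl(a\log z-a\log(z+1)+\ell z\bigr)$ on the diagonal, and the $L^{1/2}$ corrections carry $x_i,h_i$. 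The function $g(z)=a\log z-a\log(z+1)+\ell z$ has critical points solving $\ell z(z+1)=-a$, i.e.
\[
z_\pm=\frac{-\ell\pm\sqrt D}{2\ell},\qquad z_-=-1-z_+ .
\]
The symmetry $z_-=-1-z_+$ is exactly the reflection in the $\delta$-constraint, which is why the flat structure is compatible with the step analysis in \cite{BCT}. We deform the $\xi$-contours to pass through $z_-$ (near $-1$) and the $\eta$-contours through $z_+$ (near $0$), keeping the nesting and the constraint $\gamma_1=-1-\Gamma_1$. We then check that $g(z_-)-g(z_+)$ matches $L^{-1}\log\Sf Z_L$. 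Since $\xi^1=-1-\eta^1$, the $\xi^1$ weight contributes $g(-1-\eta^1)=-g(\eta^1)+$const. Combining this with the $\eta^1$ weight gives exponent $-2t_1 L g(\eta^1)$, so $\eta^1$ has a doubled quadratic form at $z_+$.

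Next we rescale $\xi^i=z_-+\zeta_i/(cL^{1/2})$ and $\eta^i=z_++w_i/(cL^{1/2})$, with $c$ fixed by $g''(z_\pm)$ so that the variances come out in units of $\sigma$. The exponents become Gaussian quadratics $(t_i-t_{i-1})\zeta_i^2/2$ plus linear terms built from $h_i\pm x_i$. The Cauchy factors linking level $i$ to level $i+1$ are of the form $1/(\xi^i-\xi^{i+1})$, $1/(\eta^{i+1}-\eta^i)$, and $1/(\xi^i-\eta^{i+1})$. The same-side ones become $L^{1/2}/(\zeta_i-\zeta_{i+1})$. The cross ones stay $O(1)$ and combine into constants that cancel against the prefactor. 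Collecting powers of $L$, the same-side factors give $L^{(m-1)}$, which is cancelled by the $(2m-1)$ differentials $L^{-1/2}$ each and by $\sqrt{8\pi}\sigma L^{1/2}$. This accounting has to be checked carefully, especially the extra half power from $S_1$ and from the lone $\eta^1$ variable.

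The limit is then a product of two independent Gaussian "nested-contour" integrals of the type in \cite[Lemma 6.1]{BCT}. There, integrals of the form $\prod 1/(\zeta_i-\zeta_{i+1})$ against Gaussians with imaginary-axis contours (ordered by nesting) are shown to equal probabilities $\p(\bigcap\{\sqrt2\B(t_j)\pm\text{shift}>h_j\})$ for a Brownian bridge. The new ingredient is the $\eta^1$ integral, which sits on both the $+$ and $-$ chains because $\xi^1=-1-\eta^1$. Its Gaussian variable therefore enters the $\xi$-chain and the $\eta$-chain with opposite signs, producing the common shift $\pm(1-t_j)Z$. We would identify this by a Fourier/Gaussian completion: write the $\eta^1$ integral as $\mathbb E$ over $Z\sim\mathcal N(0,1)$ of the product of the two chain integrals with drift $\pm Z$. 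Applying the \cite{BCT} identity to each chain conditionally on $Z$ then yields the $\min$-form of $\Sf P^{\textup{flat}}$ stated above.

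The main obstacles are twofold. First, we must justify the contour deformation and the tail estimates, i.e. steepest-descent directions through $z_\pm$ for $\mathrm{Re}\, g$, under the rigid constraint $\gamma_1=-1-\Gamma_1$. Here the reflection symmetry of $g$ helps, since the level-1 steepest-descent contour at $z_-$ is the reflection of that at $z_+$. Second, and this is the step we expect to be hardest, we must correctly identify the Gaussian $Z$ term with coefficient $(1-t_j)$, including signs and the factor $1/K_1$. For that we would first do the case $m=2$ explicitly and match it with the covariance of $\B_\pm(t)\mp(1-t)Z/\sqrt2$.
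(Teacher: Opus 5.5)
Your proposal follows the paper's proof. The paper likewise runs steepest descent at $z^\pm$, uses $z^-=-1-z^+$ to turn the $\delta$-constraint into $u_1=v_1$ with the doubled weight $e^{2t_1v_1^2+2h_1v_1}$, and your Fourier/Gaussian splitting of that shared variable into a common $Z$ is exactly its step $\phi_{4t_1}(b_1-c_1)=\int_{\R}\phi_{2t_1}(b_1-z)\phi_{2t_1}(c_1-z)\,\dd z$, followed by $\phi_2(z)^2=\phi_1(z)/\sqrt{8\pi}$, which is where the $\sqrt{8\pi}$ is absorbed. One small bookkeeping correction: the factors $\xi^i-\eta^{i+1}$ sit in the numerator of $\Pi_{\mb 1}$, and all the $O(1)$ cross factors together with $1/K_1$ reduce to a sign (note $K_1(\eta^1|\xi^1)/K_1(\xi^1|\eta^1)=-1$ and $K_1(\xi^m|\eta^m)S_1(\xi^m|\eta^m)=1$ exactly), so they do not cancel against the prefactor, which is absorbed purely by your $L$-power count.
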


The second lemma shows the rest of the terms vanish in the limit.
\begin{lemma}\label{lem:restofterms}
   There exists a constant $c>0$ such that
\begin{equation}
    \frac1{\Sf Z_{L}} \sum_{\mb n\in\N^m\setminus\{\mb 1\}}\frac1{(\mb n!)^2}{\Sf Q_m^{(\mb n), \textup{flat}}(\mb M_L,\mb N_L,\mb T_L)} \leq e^{-c L}
\end{equation}
for all sufficiently large $L>0$.
\end{lemma}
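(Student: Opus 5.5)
The plan is to follow the structure of the proof of \cite[Lemma~6.2]{BCT}: bound each term $\Sf Q_m^{(\mb n),\textup{flat}}(\mb M_L,\mb N_L,\mb T_L)$ by steepest descent, with contours deformed through the critical points of the large-$L$ exponents, and then sum over $\mb n$. First I will use the $\delta$-determinant in \eqref{eq:Dnflat} to eliminate $\bs\xi^1$. Setting $\xi^1_s=-\eta^1_{\sigma(s)}-1$ for each permutation $\sigma$ turns $\det[\delta(-\eta^1_r-1,\xi^1_s)]$ into a signed sum of $n_1!$ terms, each an $n_1$-fold integral over $\bs\eta^1\in C_{1,R}^{n_1}$. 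After the substitution, the weight for level $1$ becomes $\prod_k \Sf f_1(-\eta_k^1-1)/\Sf f_1(\eta_k^1)$. Because $f_{M,N,T}(-1-z)=(-1)^{N-M}e^{-T}\,e^{-Tz}(1+z)^N/z^{M}$ (up to sign), this is
\begin{equation*}
\exp\bigl(-T_1+(N_1+M_1)\log(1+\eta)-(N_1+M_1)\log\eta-2T_1\eta\bigr)
\end{equation*}
per variable. Its saddle structure mirrors the point-to-point case with doubled parameters, which matches the known fact that flat LPP looks like point-to-point LPP with doubled scale. For $i\ge2$ the factors $\Sf f_i$ are exactly those of \cite{BCT}, with exponents $L\,g_i(z)$ where $g_i(z)=(t_i-t_{i-1})(a\log z-a\log(z+1)+\ell z)$ to leading order. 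On the diagonal, all of these share the same pair of critical points $z_\pm=\frac{-\ell\pm\sqrt D}{2\ell}$ and differ only in their scalar prefactors.

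Second, I will normalize by $\Sf Z_L$. One checks that $\Sf Z_L$ is, up to polynomial factors, the value of $\Sf f(\xi)/\Sf f(\eta)$ for the full problem at the saddle pair $\xi=z_-$, $\eta=z_+$ (or its image under $z\mapsto -1-z$ for the first level). Concretely, $\log\frac{\ell+\sqrt D}{\ell-\sqrt D}$ and $e^{-\sqrt D L}$ arise from $L[g(z_-)-g(z_+)]$. Each pair $(\xi^i_k,\eta^i_k)$ then contributes a factor $e^{L(t_i-t_{i-1})[\mathrm{Re}\,g(\xi)-\mathrm{Re}\,g(\eta)-(g(z_-)-g(z_+))]}$. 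On steepest-descent contours this is at most $1$, and it is exponentially small away from the saddles. The key counting is that $\mb n=\mb 1$ uses exactly one pair per level and so gets a total exponent $\sum_i(t_i-t_{i-1})=1$, matching $\Sf Z_L$. For $\mb n\neq\mb 1$, either some level has $n_i\ge2$, or the extra pairs force the Cauchy factors $K_{n_i+n_{i+1}}$ to vanish to high order when several variables sit at the same saddle. Following \cite{BCT}, I will show that this yields a relative loss $e^{-cL}$. The same argument handles the auxiliary $z_i$ integrals on circles of radius greater than $1$: the factor $(z_i+1)^{n_i-n_{i+1}-1}/z_i^{n_{i+1}+1}$, together with the inner/outer contour choice, contributes only $C^{|\mb n|}$.

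Third, I will sum over $\mb n$. Each Cauchy determinant is bounded by Hadamard's inequality, $|K_n|\le n^{n/2}\prod(\text{dist})^{-1}$. The permutation sum contributes $n_1!$, which is cancelled by one $1/n_1!$ of $1/(\mb n!)^2$; the delta reduction is exactly why the prefactor is $(\mb n!)^{-2}$ rather than something larger. The remaining series $\sum_{\mb n}C^{|\mb n|}|\mb n|^{|\mb n|}/\mb n!$ diverges as written, so instead I will use the refined bound from \cite{BCT}: each extra variable beyond the first at a level costs $e^{-c'L}$ because its contour can be kept a fixed distance from the saddle. This gives terms bounded by $(C e^{-c'L})^{|\mb n|-m}n^{n/2}/n!$, which is summable and at most $e^{-cL}$.

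The main obstacle is the first level. Since $\bs\xi^1$ is tied to $\bs\eta^1$, one can no longer deform $\xi$ and $\eta$ contours independently, and the constraint $C_{1,L}=-1-C_{1,R}$ must be kept while deforming. I will check that the reflected exponent $g_1(\eta)+g_1(-1-\eta)$ is symmetric under $\eta\mapsto-1-\eta$. That symmetry allows a single deformation of $C_{1,R}$ through the saddle, with $C_{1,L}$ following as its mirror image, and it rules out crossing any poles of $\Pi_{\mb n}/K_{n_1}$ coming from the level-$2$ inner and outer contours during this deformation.
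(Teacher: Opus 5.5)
Your outline is the paper's proof. It expands the $\delta$-determinant into $n_1!$ terms and takes mirror-image circles $C_{1,R}=\Sigma_+^{0,L}$, $C_{1,L}=-1-C_{1,R}=\Sigma_-^{0,L}$. This is legitimate because $z^-=-1-z^+$ when $a=b$, so every substituted $\xi^1_s=-1-\eta^1_{\sigma(s)}$ lies on the standard left saddle circle and \cite[Lemma~5.3]{BCT} applies at level $1$ unchanged. It then bounds the $z_i$-integrals by $C^{|\mb n|}$ and the Cauchy determinants by Hadamard, and sums using $n_1!\,\mb n!/(\mb n!)^2=1/(n_2!\cdots n_m!)$.

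One justification needs correcting. The factor $e^{-cL}$ per extra pair does not come from keeping some contours a fixed distance from the saddle: all $n_i$ variables at level $i$ run over the same nested circles within $O(L^{-1/2})$ of $z^\pm$. Nor does it come from vanishing of Cauchy determinants; for $\mb n\in\N^m\setminus\{\mb 1\}$ some $n_i\ge 2$ anyway. It is exactly your ``key counting'' from the second step. $\Sf Z_L=\prod_i \Sf f_{L,i}(z^-)/\Sf f_{L,i}(z^+)$ absorbs one pair per level, and each of the remaining $n_i-1$ pairs at level $i$ is bounded by $C\,\Sf f_{L,i}(z^-)/\Sf f_{L,i}(z^+)\le C e^{-\frac12 (t_i-t_{i-1})\Sf J(\ell)L}$, because $\Sf J(\ell)=G(z^+)-G(z^-)>0$. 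Since $|\mb n|-m\ge |\mb n|/(m+1)$ when $\mb n\neq\mb 1$, this decay also absorbs the factor of up to $L^{|\mb n|}$ that Hadamard's inequality produces from the $L^{-1/2}$ spacing of the nested circles. Your final bound omits that factor.
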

\begin{lemma}
\label{lem:trivial}
    \[\lim_{L \to \infty}\frac{\sqrt{8\pi} \sigma L^{1/2}}{\Sf Z_L} \Sf Q_{1}^{\textup{flat}}(aL, aL , \ell L) = 1\]
\end{lemma}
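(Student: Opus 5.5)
The plan is to evaluate $\Sf Q_1^{\textup{flat}}(aL,aL,\ell L)$ directly from \eqref{eq:Qmflat-main}. The $n=1$ term reduces to a single contour integral, which I will treat by steepest descent. The terms with $n\ge 2$ I will show are smaller by a factor that is exponentially small in $L$.

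\emph{Step 1: reduce the $n=1$ term.} For $m=1$ there are no $z$-integrals, and $\Pi_{(1)}=K_1(\eta|\xi)K_1(\xi|\eta)S_1(\xi|\eta)$. Dividing by $K_1(\xi|\eta)$ leaves $(\xi-\eta)/(\eta-\xi)=-1$. Together with the signs $(-1)^{n_1}$ from \eqref{eq:Dnflat} and $(-1)^{|\mb n|}$ from \eqref{eq:Qnflat}, the $\delta$-function sets $\xi=-\eta-1$ on $C_{1,L}=-1-C_{1,R}$. Writing $N=\lceil aL\rceil$ and $f=f_{N,N,\ell L}$, this gives, up to an overall sign that I will check,
\[
\Sf Q_1^{(1),\textup{flat}}=\oint_{C_{1,R}}\frac{f(-\eta-1)}{f(\eta)}\frac{\dd\eta}{2\pi i}=\oint_{C_{1,R}}\Big(\frac{\eta+1}{\eta}\Big)^{2N}e^{-\ell L(2\eta+1)}\frac{\dd\eta}{2\pi i}.
\]
The phase is $\phi(\eta)=2a\log\frac{\eta+1}{\eta}-\ell(2\eta+1)$. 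Its critical points solve $\eta^2+\eta+a/\ell=0$, that is, $\eta_\pm=(-\ell\pm\sqrt D)/(2\ell)$. At $\eta_c:=\eta_+\in(-\tfrac12,0)$ we have $(\eta_c+1)/\eta_c=-(\ell+\sqrt D)/(\ell-\sqrt D)$ and $2\eta_c+1=\sqrt D/\ell$. Hence $e^{L\phi(\eta_c)}$, with the ceiling kept in the exponent $2N$, is exactly $\Sf Z_L$ from \eqref{eq:cKdef}.

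\emph{Step 2: choose the contour.} I take $C_{1,R}$ to be the circle $|\eta|=r$ with $r:=-\eta_c=(\ell-\sqrt D)/(2\ell)<\tfrac12$. Then $C_{1,L}=-1-C_{1,R}$ is the circle of radius $r$ about $-1$, and the two circles are disjoint. On this circle, put $u=\cos\theta$. Then
\[
\operatorname{Re}\phi=a\log\bigl(1+2u/r+1/r^2\bigr)-2\ell r u-\ell ,
\]
and $\partial_u\operatorname{Re}\phi\le 0$ is equivalent to $a\le \ell|\eta+1|^2$. Since $|\eta+1|\ge 1-r=(\ell+\sqrt D)/(2\ell)$, we get $\ell|\eta+1|^2\ge (\ell+\sqrt D)^2/(4\ell)>\ell/4>a$. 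So $\operatorname{Re}\phi$ has a unique maximum on the circle, at $\theta=\pi$, i.e.\ at $\eta=\eta_c$. Moreover $\phi''(\eta_c)=2a(\eta_c^{-2}-(\eta_c+1)^{-2})>0$, so the circle crosses $\eta_c$ vertically, along the steepest-descent direction. Laplace's method then gives
\[
\Sf Q_1^{(1),\textup{flat}}\sim \Sf Z_L\bigl(2\pi L\phi''(\eta_c)\bigr)^{-1/2}.
\]
It remains to check the algebraic identity $2\pi\phi''(\eta_c)=8\pi\sigma^2$ with $\sigma^2=\frac{\ell}{2a}\sqrt D$. This uses $\eta_c(\eta_c+1)=-a/\ell$ and $2\eta_c+1=\sqrt D/\ell$, which give $\phi''(\eta_c)=2a(2\eta_c+1)/(\eta_c(\eta_c+1))^2=2\ell\sqrt D/a=4\sigma^2$. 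Therefore $\sqrt{8\pi}\sigma L^{1/2}\,\Sf Q_1^{(1),\textup{flat}}/\Sf Z_L\to1$.

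\emph{Step 3: the terms with $n\ge2$.} For general $n$ the integrand is $\prod_k f(-\eta_k-1)/f(\eta_k)$ times $\det[1/(\eta_i-\xi_j)]\,S_n$ evaluated at $\xi_j=-\eta_{\sigma(j)}-1$ and summed over permutations $\sigma$. I will use the same circle for every $\eta_k$. Each $\eta_k$ then contributes at most $e^{L\operatorname{Re}\phi}\le \Sf Z_L$, and the Cauchy entries are bounded, since $|\eta_i+\eta_j+1|\ge 1-2r=\sqrt D/\ell>0$. Hadamard's inequality, the $n!$ permutations, and $|S_n|\le Cn$ then give
\[
\bigl|\Sf Q_1^{(n),\textup{flat}}\bigr|\le n!\,n^{n/2}\,Cn\,(C'\Sf Z_L)^n .
\]
After dividing by $(n!)^2$, the series over $n\ge2$ is $O(\Sf Z_L^2)$. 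Since $\Sf Z_L=e^{-\Sf J(\ell)L+O(1)}$ with $\Sf J(\ell)>0$, this is $\Sf Z_L e^{-cL}$, which is negligible against $\Sf Z_L L^{-1/2}$. This is the $m=1$ analogue of Lemma~\ref{lem:restofterms}.

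I expect the main obstacle to be bookkeeping rather than analysis: confirming that the overall sign is $+1$, and that the normalization constant comes out as exactly $\sqrt{8\pi}\sigma$. The global bound on the circle in Step 2 is handled by the monotonicity in $u$.
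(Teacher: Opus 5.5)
Your proposal is correct and is essentially the paper's route. The paper says only that Lemma~\ref{lem:trivial} follows from Lemmas~\ref{lem:dominatingterm} and~\ref{lem:restofterms}, i.e.\ steepest descent for the $\mb n=\mb 1$ term plus the Hadamard/exponential bound for the other terms, run with $m=1$; your contour $|\eta|=|z^+|$ is exactly the paper's $C_{1,R}=\Sigma_+^{0,L}$, and your computation of $\phi''(\eta_c)=4\sigma^2$ together with the monotonicity in $\cos\theta$ fills in that specialization. On the sign you flagged: $(-1)^{n}$, $(-1)^{n_1}$ and $\Pi_{(1)}/K_1(\xi\mid\eta)=-1$ give $\Sf Q_1^{(1),\textup{flat}}=-\oint_{C_{1,R}}\bigl(\frac{\eta+1}{\eta}\bigr)^{2N}e^{-\ell L(2\eta+1)}\frac{\dd\eta}{2\pi i}$, and since the counterclockwise circle crosses $\eta_c$ downward, Laplace's method gives $-\Sf Z_L\bigl(2\pi L\phi''(\eta_c)\bigr)^{-1/2}$ for that integral, so the two minus signs cancel and your final asymptotic is right.
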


These three lemmas give the proof of Theorem \ref{thm:cond-diag-fluc}. We will first introduce some notation and discuss a few important properties in Subsection \ref{sec:notation}. Then,
we prove Lemmas \ref{lem:dominatingterm} and \ref{lem:restofterms} in Subsections \ref{sec:proofdominatingterm} and \ref{sec:proofrestofterms}, respectively.
Lemma \ref{lem:trivial} directly follows from these two lemmas. 

\subsection{Summary of Lemma \ref{lem:dominatingterm}}
The authors of \cite{BCT} proved their Lemma 6.1 via application of the method of steepest descent  to $\Sf F^{(\mb 1)}(\bs \xi, \bs \eta) $ after asymptotically expanding $\mb M, \mb N, \mb T$ in terms of the large parameter $L$. We use the same method and the same asymptotic expansion of each of these terms. Details of the asymptotic expansion, steepest descent, and proof method may be found in Sections 5 and 6.1 of \cite{BCT}. Importantly, we have the same critical points $z^\pm$, and we will deform our contours in the same way, so much of the argument is essentially the same.

The main differences appear in our handling of the coupling induced by the $\delta$-function.

For clarity, in the following subsection we will reproduce, nearly verbatim, the notation used in Section 6.1 of \cite{BCT}. As mentioned, we  let $a = b$ in those formulae.

\subsection{Notation and Properties for Lemma \ref{lem:dominatingterm}}
\label{sec:notation}

The quantity $\Sf Q_m^{(\mb n), \textup{flat}}$ in \eqref{eq:Qnflat} is expressed in terms of $\Sf D^{(\mb n), \textup{flat}}_{\mb M_L, \mb N_L, \mb T_L}(\mb z)$ from  \eqref{eq:Dnflat}, in which we have the functions
\begin{equation}
    \Sf f_{L,i}(z):= \frac{f_{M_{L,i}, N_{L,i},T_{L,i}}(z)}{f_{M_{L,i-1},N_{L,i-1},T_{L,i-1}}(z)}, 
    \qquad
    f_{M,N,T}(z)=  \frac{z^Ne^{Tz}}{(z+1)^M} = e^{- M\log(z+1) +N\log z+ Tz}. 
\end{equation}
From \eqref{eq:defMLi}, we have 
\begin{equation}\label{eq:j_scaling2}
    M_{L,i} =  t_i  a L + x_i \frac{a\sigma}{ \sqrt{D}}  L^{1/2}  + \epsilon_{L, i}
    \qquad  
    N_{L, i} =  t_i a L - x_i \frac{ a\sigma}{  \sqrt{D}} L^{1/2}  + \epsilon'_{L, i}
\end{equation}
for real numbers $ \epsilon_{L, i},  \epsilon_{L, i}'\in [0,1)$. 
Thus, 
\begin{equation} \label{eq:fLide}
    \Sf f_{L,i}(z) = e^{(t_i-t_{i-1}) G(z) L + H_i(z) L^{1/2} + E_{L, i}(z) } 
\end{equation}
where
\begin{equation} \label{eq:GGHHE} \begin{split}
    &G(z)= -a\log(z+1)+a\log z +\ell z, \\
    &H_i(z)= - (x_i-x_{i-1}) \frac{a\sigma}{  \sqrt{D}} \log(z+1) - (x_i-x_{i-1}) \frac{ a\sigma}{  \sqrt{D}} \log z + (h_i-h_{i-1}) \sigma z, \\
    &E_{L, i}(z)= - \delta^1_{L,i}\log (z+1)+\delta^2_{L,i} \log z, 
\end{split} \end{equation}
with real numbers 
\begin{equation}
    \delta^1_{L,i}, \, \delta^2_{L,i} \in (-1, 1). 
\end{equation}
Here, we set $x_0=t_0=h_0=x_m=h_m=0$ and $t_m=1$. 
The $\Sf f_{L,i}(z)$ are analytic except possibly  at $z=0$ and $z= -1$. 

We now list a few properties:

\begin{itemize}
\item 
From Lemma 5.2 in \cite{BCT}, the critical points of $G$ are
\begin{equation} \label{eq:c3+-}
    z^{\pm} = \frac{- \ell  \pm \sqrt{D}}{2\ell}, \qquad D=\ell^2-4a\ell, 
\end{equation}
and they satisfy the inequalities $-1<z^-<z^+<0$. 
Importantly, we will note that $a = b$ forces $-1 - z^+ = z^-$. This proves to be crucial as $\delta$ couples $-1-\eta^1$ to $\xi^1$.
\item 
One may check  (see 5.5 in \cite{BCT}) that 
\begin{equation} \label{eq:GGpaHp}
    G''(z^\pm)= \mp\frac{\sqrt{D}\ell}{a} =  \mp 2 \sigma^2, \quad H_i'(z^\pm)=  \sigma \left( \pm (x_i-x_{i-1}) + h_i-h_{i-1} \right), \quad \Sf c_\pm = 1.\footnote{An expression for $ \Sf c_\pm$ when $a \neq b$ may be found in (1.14) of \cite{BCT})}
\end{equation}

\item 
Since $\prod_{i=1}^m \Sf f_{L,i}(z)= f_{M_{L,m}, N_{L,m},T_{L,m}}(z)$, we have that
\begin{equation} \label{eq:finzpm}
    \prod_{i=1}^m \frac{\Sf f_{L,i}(z^-)}{ \Sf f_{L,i}(z^+)}
    = \frac{f_{M_{L,m}, N_{L,m},T_{L,m}}(z^-)}{f_{M_{L,m}, N_{L,m},T_{L,m}}(z^+)} 
    = \Sf Z_L, 
\end{equation}
where $\Sf Z_L$ is as in \eqref{eq:cKdef}. 
\item 
It is a direct computation that
\begin{equation} 
    G(z^+)-G(z^-)= \sqrt{D} + 2a \log \Bigg( \frac{\ell-\sqrt{D}}{\ell+\sqrt{D}}\Bigg)  =:\Sf J(\ell), 
\end{equation}
In particular, $\Sf J(\ell)>0$. 
Since $t_i-t_{i-1}>0$ for every $i$, there exists $L_0>0$ such that 
\begin{equation} \label{eq:frati}
    \frac{\Sf f_{L, i}(z^-)}{\Sf f_{L, i}(z^+)} \le e^{-\frac{1}{2} (t_i-t_{i-1}) (G(z^+)- G(z^-)) L}  \le  e^{- \frac{1}{2} \tau J(\ell) L} , \quad 
    \tau := \min_{1 \le i \le m} (t_i - t_{i-1}) > 0, 
\end{equation}
for every $L>L_0$ and $i=1, \dots , m$. 
\end{itemize}

\subsection{Proof of Lemma \ref{lem:dominatingterm}}
\label{sec:proofdominatingterm}

 Recall Equation \eqref{eq:Q11}: \begin{equation}{\label{eq:Q11L}}
 \Sf Q^{(\mb 1), \textup{flat}}_{m}(\mb M_L, \mb N_L, \mb T_L)  = \int_{\vec \gamma} \frac{\dd \bs \xi}{(2\pi i)^{m}} \int_{\vec \Gamma} \frac{\dd \bs \eta}{(2\pi i)^{m}}\; \frac{\Pi_{\mb 1}(\bs \xi, \bs \eta)}{K_{1}(\xi^{1} | \eta^{1})} \Sf F_{\mb M_L, \mb N_L, \mb T_L}^{(\mb 1)} (\bs \xi, \bs \eta) \delta(-\eta^1 - 1, \xi^1).
\end{equation}
Using Cauchy's theorem, we may deform the contours --      without changing the value of the integral -- to\[
 \vec{\gamma}= \Sigma_-^{1,L}\times \cdots \times \Sigma_{-}^{m,L},
    \quad 
    \vec{\Gamma} =  \Sigma_{+}^{1,L} \times \cdots \times \Sigma_{+}^{m, L},
\]
where $\Sigma^{i, L}_{\pm}$ are as in Section 6.2 of \cite{BCT}. That is, 
\[
 \Sigma^{i, L}_{-}= \{ z\in \C \, : \, |z+1|= | z^- +1|+b L^{-\frac12} \}, 
    \qquad
    \Sigma^{i, L}_{+}= \{ z\in \C \, : \, |z|= |z^+| +b L^{-\frac12} \}. 
\]
Notice that $\Sigma^{1,L}_+$ and $\Sigma^{1,L}_{-}$ satisfy $-1 - \Sigma^{1,L}_+ = \Sigma^{1,L}_{-}$ so that the $\delta$-function still makes sense. 

Fix $\epsilon\in (0,1/2)$ and define $D_{L,-}^{\epsilon}$ and $D_{L,+}^{\epsilon}$ as in Section 6.2 of \cite{BCT}: \[
D_{L,-}^{\epsilon}=\{z\in \C : |z-z^-|\le L^{-\frac12+\frac{\epsilon}{3} }\}, \qquad D_{L,+}^{\epsilon}=\{z\in \C : |z-z^+|\le L^{-\frac12+\frac{\epsilon}{3} }\}
\] Set 
\[
    \vec{\gamma}^\epsilon= (\Sigma_{-}^{1,L}\cap D_{L,-}^\epsilon) \times \cdots \times (\Sigma_{-}^{m,L}\cap D_{L,-}^\epsilon), 
    \qquad
    \vec{\Gamma}^\epsilon =  (\Sigma_{+}^{1,L}\cap D_{L,+}^\epsilon) \times \cdots \times (\Sigma_{+}^{m, L}\cap D_{L,+}^\epsilon). 
\]
\subsubsection{Bounding Away from the Critical Points}
We may rewrite the rational function:
\begin{align}
\label{eq:Piexpanded}
\frac{\Pi_{\mb 1}(\bs \xi, \bs \eta)}{K_{1}(\xi^{1} | \eta^{1})} &= \lt(\xi^1 - \eta^1\rt)\frac{(-1)^{m}}{\xi^{m}-\eta^{m}} \prod_{i=1}^{m-1} \frac{(\xi^{i}-\eta^{i+1})(\eta^{i}-\xi^{i+1})}{(\xi^{i}-\xi^{i+1})(\eta^{i}-\eta^{i+1})(\xi^{i}-\eta^{i})^2} \notag\\
& = \underbrace{\frac{(-1)^{m}}{\xi^{m}-\eta^{m}} \prod_{i=2}^{m-1} \frac{(\xi^{i}-\eta^{i+1})(\eta^{i}-\xi^{i+1})}{(\xi^{i}-\xi^{i+1})(\eta^{i}-\eta^{i+1})(\xi^{i}-\eta^{i})^2}}_{\Pi_\mb 1^*(\bs \xi, \bs \eta)}\underbrace{\lt[\frac{(\xi^{1}-\eta^{2})(\eta^{1}-\xi^{2})}{(\xi^{1}-\xi^{2})(\eta^{1}-\eta^{2})(\xi^{1}-\eta^{1})}\rt]}_{W(\eta^1, \eta^2, \xi^1, \xi^2)}.
\end{align}
We define $\Pi_\mb 1^*(\bs \xi, \bs \eta)$, and $W(\eta^1, \eta^2, \xi^1, \xi^2)$ as indicated above. Similarly, we expand \begin{equation}
\label{eq:Fexpanded}
\Sf F^{(\mb 1)}_{\mb M_L, \mb N_L, \mb T_L} (\bs\xi, \bs\eta) 
    =  \prod_{i=1}^m  \frac{\Sf f_{L,i}(\xi^{i})}{\Sf f_{L,i}(\eta^{i})} =  \frac{\Sf f_{L,1}(\xi^1)}{\Sf f_{L,1}(\eta^1)}\underbrace{ \prod_{i=2}^m  \frac{\Sf f_{L,i}(\xi^{i})}{\Sf f_{L,i}(\eta^{i})}}_{\Sf F^{(\mb 1),*}_{\mb M_L, \mb N_L, \mb T_L} (\bs\xi, \bs\eta) }.
\end{equation}
We define $\Sf F^{(\mb 1),*}_{\mb M_L, \mb N_L, \mb T_L} (\bs\xi, \bs\eta)$ as indicated above. Substituting \eqref{eq:Piexpanded} and  \eqref{eq:Fexpanded}, and evaluating the $\delta$-function, \eqref{eq:Q11L} becomes \begin{multline}
\label{eq:Qexpanded}
\Sf Q^{(\mb 1), \textup{flat}}_{m}(\mb M_L, \mb N_L, \mb T_L)  =   \int_{\Sigma_+^{1,L}}\frac{\dd\eta^1}{2\pi i }\int_{\Sigma_-^{2,L} \times \cdots \times \Sigma_{-}^{m,L}}\frac{\dd \bs \xi^*}{(2\pi i )^{m-1}} \int_{\Sigma_+^{2,L} \times \cdots \times \Sigma_{+}^{m,L}} \frac{\dd \bs \eta^*}{(2\pi i )^{m-1}}\; \\\Pi_\mb 1^*(\bs \xi ^*, \bs \eta ^*)\Sf F_{\mb M_L, \mb N_L, \mb T_L}^{(\mb 1),*} (\bs \xi^*, \bs \eta^*) \times
 \lt[\frac{\Sf f_{L,1}(-1 - \eta^1)}{\Sf f_{L,1}(\eta^1)} W(\eta^1, \eta^2, -1 - \eta^1, \xi^2)\rt],
\end{multline}
where the $*$, here and going forward, denotes omission of the $i = 1$ index. 
 
We consider \begin{multline*}
\int_{\Sigma_+^{1,L} \setminus D^{\eps}_{L,+}}\frac{\dd\eta^1}{2\pi i} \int_{\vec \gamma^* \setminus \vec \gamma^{\eps,*}} \frac{\dd \bs \xi^*}{(2\pi i)^{m-1}} \int_{\vec \Gamma^* \setminus \vec \Gamma^{\eps,*}} \frac{\dd \bs \eta^*}{(2\pi i)^{m-1}}\; \Pi_\mb 1^*(\bs \xi ^*, \bs \eta ^*)\Sf F_{\mb M_L, \mb N_L, \mb T_L}^{(\mb 1),*} (\bs \xi ^*, \bs \eta ^*) \\\times
\lt[\frac{\Sf f_{L,1}(-1 - \eta^1)}{\Sf f_{L,1}(\eta^1)} W(\eta^1, \eta^2, -1 - \eta^1, \xi^2)\rt].
\end{multline*}
 Lemma 5.3(a) and Lemma 5.3(b) from \cite{BCT} bound the ratios $\frac{\Sf f_L(z)}{\Sf f_L(z^-)}$ and $\frac{\Sf f_L(z^+)}{\Sf f_L(z)}$ on the region we are considering. Since $a > 0$, and $\ell > 2(a+a) =(\sqrt{a}+ \sqrt{a})^2$, these lemmas apply directly to $\Sf f_{L,i}(z)$, $i \geq 2$. As mentioned earlier, $-1 - \Sigma^{1,L}_+ = \Sigma^{1,L}_{-}$, and $-1 - z^+ = z^-$. We can also see that when one writes \[
\Sf f_{L,1}(-1 -\eta^1)  = e^{t_1G(-1 - \eta^1)L + H_1(-1-\eta^1)L^{1/2} + E_{L,1}(-1-\eta^1)}
\]
that the critical points of the function $G(-1 - z)$ are the same as that of $G(z)$. Thus, $\Sf f_{L,1}(-1 - \eta^1)$ also satisfies the same Lemma 5.3(a) from \cite{BCT}. As in Section 6.1 in \cite{BCT}, we have that \[
\frac{\Sf f_{L,1}(z^-)}{ \Sf f_{L,1}(z^+)}\underbrace{\prod_{i=2}^m \frac{\Sf f_{L,i}(z^-)}{ \Sf f_{L,i}(z^+)}}_{\Sf Z_L^*} = \Sf Z_L.
\]
$\Sf Z_L^*$ is defined as indicated above. As in Section 6.2 in \cite{BCT}, applying Lemmas 5.3(a) and 5.3(b) to each ratio shows that \begin{equation}
\label{eq:unifF}
\frac{|\Sf F^{*}_{\mb M_L, \mb N_L, \mb T_L}|}{\Sf Z_L^*} = O\big(e^{-cL^{2\eps/3}}\big)  \text{ uniformly  for } (\bs \xi^*, \bs \eta^*) \in (\vec \gamma^* \times \vec \Gamma^* ) \setminus (\vec \gamma^{\eps,*} \times \vec \Gamma^{\eps,*}), \text{ as } L \to \infty.
\end{equation}
Similarly, we have that \text{as} $L \to \infty$, \begin{equation}
\label{eq:unifFF}
 \frac{\Sf f_{L,1}(-1 - \eta^1)}{ \Sf f_{L,1}(z^-)}  \frac{\Sf f_{L,1}(z^+)}{\Sf f_{L,1}(\eta^1)} = O\big(e^{-cL^{2\epsilon/3}}\big)\text{ uniformly  for } (\xi^1, \eta^1) \in (\Sigma_-^{1,L} \setminus D^\eps_{L,-})\times (\vec \Sigma_+^{1,L} \setminus D_{L,+}^\eps).
\end{equation}
One bounds $\Pi_\mb 1^*(\bs \xi^*, \bs \eta^*)$ in exactly the same method as $\Pi_\mb 1(\bs \xi, \bs \eta)$ is bounded in  \cite{BCT} Section 6.2, as we are considering the same contours and variables on those contours, except for the $\eta^1$ and $\xi^1$ components. In \cite{BCT}, the authors had
\[\Pi_\mb 1(\bs \xi, \bs \eta) = O(L^{m-1}) \quad \text{uniformly for } (\bs \xi, \bs \eta) \in (\vec \gamma \times \vec \Gamma) \setminus (\vec \gamma^\eps \times \vec \Gamma^\eps), \; \text{ as } L \to \infty.\]
Because we are excluding the first term in the product of $\Pi_\mb1 (\bs \xi, \bs \eta)$ to form $\Pi_\mb1^* (\bs \xi^*, \bs \eta^*)$, we instead have\begin{equation}
\label{eq:unifPi}
\Pi_\mb 1^*(\bs \xi^*, \bs \eta^*) = O(L^{m-2}) \quad \text{uniformly for } (\bs \xi^*, \bs \eta^*) \in (\vec \gamma^*\times \vec \Gamma^*) \setminus (\vec \gamma^{\eps,*} \times \vec \Gamma^{\eps,*}), \; \text{ as } L \to \infty.\end{equation}
Again using this method, we see that for $W(\eta^1,\eta^2, -1 - \eta^1, \xi^2)$,
as $L \to \infty$, \begin{equation}
\label{eq:unifW}
W(\eta^1,\eta^2, -1 - \eta^1, \xi^2)= O(L) \text{ uniformly for } (\xi^2, \eta^1, \eta^2) \in   (\vec \Sigma_-^{2,L} \setminus D^\eps_{L,-}) \times (\vec \Sigma_+^{1,L} \setminus D^\eps_{L,+}) \times (\vec \Sigma_+^{2,L} \setminus D^\eps_{L,+}).
\end{equation}
Thus, similar to (6.19) in \cite{BCT}, we have, using \eqref{eq:Qexpanded}, \eqref{eq:unifF}, \eqref{eq:unifFF}, \eqref{eq:unifPi}, and \eqref{eq:unifW}: \[
\frac{1}{\Sf Z_L L^{m-1} }  
    \int_{ (\vec{\gamma}\times \vec{\Gamma}) \setminus (\vec{\gamma}^\epsilon \times \vec{\Gamma}^\epsilon) } \dd \bs \xi  \, \dd \bs \eta \, 
    \frac{\Pi_{\mb 1}(\bs \xi, \bs \eta)}{K_{1}(\xi^{1} | \eta^{1})} \Sf F_{\mb M_L, \mb N_L, \mb T_L}^{(\mb 1)} (\bs \xi, \bs \eta) \delta(-\eta^1 - 1, \xi^1)
    = O(e^{-cL^{2\epsilon/3}}).  
\]
 \subsubsection{Change of Variables Near the Critical Points}
Now we consider \begin{equation}
\label{eq:startofassembly}\int_{\vec \gamma^\eps} \dd \bs \xi \int_{\vec \Gamma^\eps} \dd \bs \eta\; \frac{\Pi_{\mb 1}(\bs \xi, \bs \eta)}{K_{1}(\xi^{1} | \eta^{1})} \Sf F_{\mb M_L, \mb N_L, \mb T_L}^{(\mb n)} (\bs \xi, \bs \eta) \delta(-\eta^1 - 1, \xi^1).
\end{equation}
Consider the change of variables given in \cite{BCT}: \[
\xi^i = z^- + \frac{u_i}{\sigma L^{1/2}} =: \ph(u_i), \quad \eta^i = z^+ - \frac{v_i}{\sigma L^{1/2}} =: \psi(v_i).
\]
By textbook theory of distributions we obtain \begin{equation}
\label{eq:finaldeltapiece}
   \sigma^{-1} L^{-1/2} \ph^* \delta(-1-\psi(v_1),u_1) = \delta(v - (1 + z^+ + z^-) \sigma L^{1/2},u_1) = \delta(v_1 ,u_1) 
\end{equation}
since $a = b$, so we have $1 + z^+  + z^- = 0$. We now examine the expressions of the rest of the integrand after, in the specified order, \begin{enumerate}
    \item Changing variables,
    \item Coupling $u_1 = v_1$ via $\delta(v_1,u_1)$,
    \item Taking the limit as $L \to \infty$.
\end{enumerate}
Under change of coordinates, \[K_1(u_1,v_1)  = \frac{\sigma L^{1/2}}{\sigma L ^{1/2}(z^- - z^+)+ u_1 + v_1}.  \]
After evaluating the $\delta$-function coupling $u_1 = v_1$, and substituting in for $z^- - z^+$ from \eqref{eq:c3+-}, \[
K_1(v_1) = \frac{\sigma L^{1/2}}{- \frac{\sigma\sqrt{D}}{\ell}L^{1/2} + 2v_1} = \frac{\sigma L^{1/2}}{- \frac{\sigma \sqrt{D}}{\ell}L^{1/2}} \lt(1 + O (L^{-1/2})\rt) = - \frac{\ell}{\sqrt{D}}\lt(1 + O (L^{-1/2})\rt)
\]
with the second equality via a geometric series expansion  for sufficiently large $L$. 
As in Section 6.2 in \cite{BCT}, we make the change of variables for $\Pi_\mb1 (\bs \xi, \bs \eta)$  and substitute in for $z^+ - z^-$ from \eqref{eq:c3+-} to obtain $\hat\Pi_{\mb 1}(\mb u, \mb v)$ as below.
\[
\hat\Pi_{\mb 1}(\mb u, \mb v) = \frac{\ell(\sigma^2 L)^{m-1}}{\sqrt D}\lt[\prod_{i=1}^{m-1} \frac{1}{(u_i - u_{i+1})(v_{i+1} - v_i)}\rt]\lt(1 + O(L^{-\frac12 + \frac{\eps}{3}})\rt).
\]
As $L \to \infty$, division by $K_1$ multiplies this expression by $-\sqrt{D}/\ell$. Thus, 
\begin{equation}
\label{eq:finalPipiece}\frac{\hat\Pi_{\mb 1}(\mb u,\mb v)}{K_1(u_1,v_1)} = -\,(\sigma^2 L)^{m-1}\lt[\prod_{i=1}^{m-1}\frac{1}{(u_i-u_{i+1})(v_{i+1}-v_i)}\rt]\lt(1 + O(L^{-\frac12+\frac{\eps}{3}})\rt).\end{equation}
Note the factor $(\sigma^2 L)^{m-1}$ is the same as in the step case \cite{BCT}, and this term is asymptotically identical to $\hat\Pi_{\mb 1}(\mb u,\mb v)$ without dividing by $K_1$. 

Now make the change of variables into $\Sf F_{\mb M_L, \mb N_L, \mb T_L}^{(\mb n)} (\bs \xi, \bs \eta)$, and let this function be named $\hat{\Sf F}_L(\mb u, \mb v)$.

Note $\epsilon$ is still in $(0,1/2)$. Lemma 5.1 from \cite{BCT} allows us to write $\Sf f_L (z_c + wL^{-1/2})$ as \[
\Sf f_L(z_c) \exp\bigg(\frac{1}{2}G''(z_c) w^2 + H'(z_c) w \bigg)\bigg( 1 + O(L^{-1/2 + \epsilon})\bigg)
\] for sufficiently large $L$, $|w| \leq L^{\epsilon/3}$. We use this fact exactly as it is used in  Section 6.2 of \cite{BCT} and obtain  \begin{equation}
\label{eq:nearcriticalpointsF}
 \frac{\hat{\Sf F}_L(\mb u, \mb{v})}{\Sf Z_L}
= \Sf F(\mathbf{u}, \mathbf{v}) \lt(1 + O(L^{-1/2+\epsilon}) \rt)
    \quad \text{for $(\mathbf{u}, \mathbf{v})\in \vec{\Sigma}^-_L\times \vec{\Sigma}^+_L$,}\end{equation}
where
\begin{equation}
\Sf F(\mb u, \mb v) = \prod_{i=1}^m e^{(t_i - t_{i-1})u_i^2 + [-(x_i - x_{i-1}) + (h_i - h_{i-1})]u_i}\; e^{(t_i - t_{i-1})v_i^2 + [(x_i - x_{i-1}) + (h_i - h_{i-1})]v_i},
\end{equation}
and $\vec \Sigma^-_L \times \vec \Sigma_L^+$ are the images of $\vec \gamma^\eps$ and $\vec \Gamma^\eps$ under the change of variables. We note here the distinction from the similar expression in \cite{BCT} in that we have $\Sf c_\pm  =1$ from \eqref{eq:GGpaHp}. 

Coupling $u_1 = v_1$ under the $\delta$-kernel, then taking the limit as $L \to \infty$ yields \begin{equation}
\label{eq:finalFpiece}
   \exp(2t_1v_1^2 + 2h_1 v_1)\Sf  F^*(\mb u^*, \mb v^*),  
\end{equation}
in which $*$ denotes the omission of the $i = 1$ term, so \[
\Sf  F^*(\mb u^*, \mb v^*) : = \prod_{i=2}^m e^{(t_i - t_{i-1})u_i^2 + [-(x_i - x_{i-1}) + (h_i - h_{i-1})]u_i}\; e^{(t_i - t_{i-1})v_i^2 + [(x_i - x_{i-1}) + (h_i - h_{i-1})]v_i}.
\]
One notes, as in Section 6.2 of \cite{BCT}, that in the limit, $\Sf F^*(\mathbf{u^*}, \mathbf{v^*})$, $\exp(2t_1v_1^2 + 2h_1 v_1)$ each have super-exponential decay  in any closed sector strictly contained in $\{z\in \C: \arg (z)\in (\frac{\pi}{4}, \frac{3\pi}{4})\cup (-\frac{3\pi}{4}, -\frac{\pi}4)\}$.

Putting together \eqref{eq:startofassembly}, \eqref{eq:finaldeltapiece}, \eqref{eq:finalPipiece}, \eqref{eq:finalFpiece}, along with applying dominated convergence, as described in Section 6.2 of \cite{BCT}, we have the following: 
\begin{align}
\label{eq:covintegral}
   &\lim_{L \to \infty}\frac{\sqrt{8\pi} \sigma L^{1/2}}{\Sf Z_L}\int_{\vec \gamma^\eps} \frac{\dd \bs \xi}{(2\pi i )^m}\int_{\vec \Gamma^\eps} \frac{\dd \bs \eta}{(2\pi i )^m}\; \frac{\Pi_{\mb 1}(\bs \xi, \bs \eta)}{K_{1}(\xi^{1} | \eta^{1})} \Sf F_{\mb M_L, \mb N_L, \mb T_L}^{(\mb n)} (\bs \xi, \bs \eta) \delta(-\eta^1 - 1, \xi^1)\\
   &\qquad\qquad\qquad\qquad\qquad\qquad\qquad= \lim_{L \to \infty}\frac{\sqrt{8\pi}}{(\sigma^2 L)^{m-1} \Sf Z_L }   \int_{{\vec{\Sigma}^{-}}} \frac{\dd\mb u}{(2\pi i)^{m}}\int_{{\vec{\Sigma}^{+}}} \frac{\dd\mb v}{(2\pi i )^m} \frac{\hat\Pi_{\mb 1}(\mb u,\mb v)}{K_1(u_1,v_1)} \Sf {\hat{F}_L}(\mb u, \mb v)  \delta(v_1, u_1) \notag\\
   &\qquad\qquad\qquad\qquad\qquad\qquad\qquad= - \frac{\sqrt{8\pi}}{(\sigma^2L)^{m-1}} \int_{1 + i\R } \frac{\dd v_1}{2\pi i}\int_{{\vec{\Sigma}^{-}}*} \frac{\dd\mb u^*}{(2\pi i )^{m-1}} \int_{{\vec{\Sigma}^{+}*}}  \frac{\dd\mb v^*}{(2\pi i )^{m-1}} \notag\\
   &\qquad\qquad\qquad\qquad\qquad\qquad\qquad\qquad \times \lt[\prod_{i=2}^{m-1}\frac{1}{(u_i-u_{i+1})(v_{i+1}-v_i)}\rt]\lt[\frac{\exp(2t_1v_1^2 + 2h_1 v_1)}{(u_1-u_2)(v_2-v_{1})}\rt] \Sf F^*(\mb u^*, \mb v^*) \notag.
\end{align}
As usual, $*$ denotes the absence of the 1-index.  Lastly, note that as described in Section 6.2 of \cite{BCT}, each contour becomes a vertical line as $L \to \infty$. 

\subsubsection{Evaluating the Integral}
\eqref{eq:covintegral} is nearly analogous to (4.3) in \cite{liu2022}, from which point this integral is evaluated. However, there are many differences in convention, so the following outlines the computation.

We adopt the conventions
\[
\wt t_j := t_j - t_{j-1}, \qquad \wt b_j := b_j - b_{j-1}, \qquad \wt c_j := c_j - c_{j-1}, \qquad j = 1, \dots, m,
\]
with $t_0 = 0$, $t_m = 1$, and $b_0 = c_0 = b_m = c_m = 0$. We also have $u_1 = v_1$ enforced by our $\delta$-function, compared with the convention of \cite{liu2022} where $u_1 = -v_1$. Define
\[
\Sf g(u; a, b) := e^{a u^2 + b u},
\]
so that $\Sf g(u; a, b) = f(u; 2a, b)$ in the notation (3.4) of \cite{liu2022}, where $f(u; a, b) = e^{\frac{1}{2} a u^2 + b u}$. Then, the identity (3.5) of \cite{liu2022} in our notation is
\begin{equation}\label{eq:gaussid}
\int_{\Gamma} \Sf g(u; a, b)\frac{\dd u}{2\pi i} = \phi_{2a}(b), \qquad a > 0,
\end{equation}
for any contour $\Gamma$ parallel to the $y$-axis with upward orientation, where $\phi_a$ denotes the density of a mean 0 Gaussian random variable with variance $a$. We will use the well-known semigroup property:
\begin{equation}\label{eq:semigroup}
\phi_{a + a'}(b - c) = \int_{\R} \phi_{a}(b - z) \phi_{a'}(c - z)\, \dd z.
\end{equation}
Lastly, let $\Sf q^{\mathrm{br}, z}_{t_1, \dots, t_{m-1}}$ denote the conditional joint probability density function of $\sqrt 2\, \mathbb B^{\mathrm{br}, Z/\sqrt 2}$ given $Z = z$, where
\begin{equation}
\label{eq:condbridgeexpansion}
\mathbb B^{\mathrm{br}, \pm Z/\sqrt 2}(t) := \mathbb B^{\mathrm{br}}(t) \pm \frac{1 - t}{\sqrt 2}\, Z
\end{equation}
for $Z \sim \mathcal N(0,1)$ independent of $\mathbb B^{\mathrm{br}}$. The finite-dimensional density can be written as \begin{equation}\label{eq:bridgedensity}
\phi_{2 t_1}(b_1 - z) \prod_{j=2}^{m} \phi_{2 \wt t_j}(\wt b_j) = \phi_2(z) \Sf q^{\mathrm{br}, z}_{t_1, \dots, t_{m-1}}(b_1, \dots, b_{m-1}).
\end{equation}
We now trace a version of the argument in \cite{liu2022} from (4.3) through to the end of \cite{liu2022}. From \eqref{eq:covintegral},
\begin{multline*}
- \frac{\sqrt{8\pi}}{(\sigma^2L)^{m-1}} \int_{1 + i\R } \frac{\dd v_1}{2\pi i}\int_{{\vec{\Sigma}^{-}}*} \frac{\dd\mb u^*}{(2\pi i )^{m-1}} \int_{{\vec{\Sigma}^{+}*}}  \frac{\dd\mb v^*}{(2\pi i )^{m-1}} \\
\times \lt[\prod_{i=2}^{m-1}\frac{1}{(u_i-u_{i+1})(v_{i+1}-v_i)}\rt]\lt[\frac{\exp(2t_1v_1^2 + 2h_1 v_1)}{(u_1-u_2)(v_2-v_{1})}\rt] \Sf F^*(\mb u^*, \mb v^*) 
\end{multline*}
\begin{align*}
&=\sqrt{8\pi} \int_{1 + i\R} \frac{\dd v_1}{2\pi i} \int_{\vec\Sigma^{-}{}^*} \frac{\dd\mb u^*}{(2\pi i)^{m-1}} \int_{\vec\Sigma^{+}{}^*} \frac{\dd\mb v^*}{(2\pi i)^{m-1}} \int_{\substack{b_j \ge h_j - x_j, \\ c_j \le -h_j - x_j, \\ j = 1, \dots, m-1}} \dd\mb b \dd\mb c \prod_{j=1}^{m} \Sf g(u_j; \wt t_j, \wt b_j) \Sf g(v_j; \wt t_j, -\wt c_j) \\
&\stackrel{\text{Tonelli}}{=} \sqrt{8\pi} \int_{\substack{b_j \ge h_j - x_j, \\ c_j \le -h_j - x_j, \\ j = 1, \dots, m-1}} \dd\mb b \dd\mb c \int_{1 + i\R} \frac{\dd v_1}{2\pi i} \int_{\vec\Sigma^{-}{}^*} \frac{\dd\mb u^*}{(2\pi i)^{m-1}} \int_{\vec\Sigma^{+}{}^*} \frac{\dd\mb v^*}{(2\pi i)^{m-1}} \prod_{j=1}^{m} \Sf g(u_j; \wt t_j, \wt b_j) \Sf g(v_j; \wt t_j, -\wt c_j)  \\
&= \sqrt{8\pi} \int_{\substack{b_j \ge h_j - x_j, \\ c_j \le -h_j - x_j, \\ j = 1, \dots, m-1}} \dd\mb b \dd\mb c \phi_{4 t_1}(b_1 - c_1) \prod_{j=2}^{m} \phi_{2 \wt t_j}(\wt b_j) \phi_{2 \wt t_j}(\wt c_j) \tag{via \eqref{eq:gaussid}}\\
&= \sqrt{8\pi} \int_{\R} \dd z \int_{\substack{b_j \ge h_j - x_j, \\ c_j \le -h_j - x_j, \\ j = 1, \dots, m-1}} \dd\mb b \dd\mb c \lt[ \phi_{2 t_1}(b_1 - z) \prod_{j=2}^{m} \phi_{2 \wt t_j}(\wt b_j) \rt] \lt[ \phi_{2 t_1}(c_1 - z) \prod_{j=2}^{m} \phi_{2 \wt t_j}(\wt c_j) \rt] \; \tag{via  \eqref{eq:semigroup}}
\\
&= \sqrt{8\pi} \int_{\R} \phi_2(z)^2 \lt[ \int_{\substack{b_j \ge h_j - x_j, \\ j = 1, \dots, m-1}} \Sf q^{\mathrm{br}, z}(\mb b) \dd\mb b \rt] \lt[ \int_{\substack{c_j \le -h_j - x_j, \\ j = 1, \dots, m-1}} \Sf q^{\mathrm{br}, z}(\mb c) \dd\mb c \rt] \dd z  \tag{via \eqref{eq:bridgedensity}}
\end{align*}
Note that \[\phi_2(z)^2 = \frac{1}{4\pi} e^{-z^2/2} = \frac{1}{\sqrt{8\pi}} \phi_1(z),\]
so we now have that \begin{multline*}
\lim_{L \to \infty} \frac{\sqrt{8\pi}}{(\sigma^2 L)^{m-1} \Sf Z_L} \int_{\vec\Sigma^-} \frac{\dd\mb u}{(2\pi i)^m} \int_{\vec\Sigma^+} \frac{\dd\mb v}{(2\pi i)^m} \frac{\hat\Pi_{\mb 1}(\mb u, \mb v)}{K_1(u_1, v_1)} \hat{\Sf F}_L(\mb u, \mb v)\, \delta(v_1, u_1) \\
= \p\Big( \sqrt 2 \mathbb B^{\mathrm{br}, Z/\sqrt 2}_+(t_j) \ge h_j - x_j, \sqrt 2 \mathbb B^{\mathrm{br}, Z/\sqrt 2}_-(t_j) \le -h_j - x_j, \; j = 1, \dots, m-1 \Big) \\
= \p\Big( \sqrt 2 \mathbb  B^{\mathrm{br}, Z/\sqrt 2}_+(t_j) \ge h_j - x_j, \sqrt 2  \mathbb B^{\mathrm{br}, -Z/\sqrt 2}_-(t_j) \ge h_j + x_j, \; j = 1, \dots, m-1 \Big).
\end{multline*}
 $\mathbb B^{\mathrm{br}}_+$ and $\mathbb B^{\mathrm{br}}_-$ are independent standard Brownian bridges and $Z \sim \mathcal N(0,1)$ is independent of both. Then, both processes are conditionally independent given $Z$. We may rewrite the limiting distribution as \[
\p\lt( \bigcap_{j=1}^{m-1} \Big\{ \min\big\{ \sqrt 2 \mathbb B^{\mathrm{br}, Z/\sqrt 2}_+(t_j) + x_j, \sqrt 2 \mathbb B^{\mathrm{br}, -Z/\sqrt 2}_-(t_j) - x_j \big\} > h_j \Big\} \rt), 
\]
Using the identity $\min (a,b) = \frac{a+b- |a-b|}{2}$ after substituting \eqref{eq:condbridgeexpansion}, we may rewrite once more, yielding \[
\p\lt( \bigcap_{j=1}^{m-1} \lt\{\lt(\frac{\mathbb B_+^{\mathrm{br}}(t_j) + \mathbb B_-^{\mathrm{br}}(t_j)}{\sqrt{2}}\rt) - \lt|\frac{\mathbb B_+^{\mathrm{br}}(t_j) - \mathbb B_-^{\mathrm{br}}(t_j)}{\sqrt{2}} + (1-t_j)Z + x_j\rt| \rt\} > h_j\rt).
\]
By defining \[
\mathbb B_1^{\mathrm{br}}(t) = \frac{\mathbb B_+^{\mathrm{br}}(t) + \mathbb B_-^{\mathrm{br}}(t)}{\sqrt{2}}, \quad \mathbb B_2^{\mathrm{br}}(t) = \frac{\mathbb B_+^{\mathrm{br}}(t) - \mathbb B_-^{\mathrm{br}}(t)}{\sqrt{2}},
\]
we obtain \[
\p\lt( \bigcap_{j=1}^{m-1} \lt\{ \mathbb B_1^{\mathrm{br}}(t_j)- \lt|\mathbb B_2^{\mathrm {br}}(t_j) + (1-t_j)Z + x_j\rt| \rt\} > h_j\rt) = \Sf P^{\textup{flat}}(\mb t, \mb x, \mb h).
\]
Thus, we have the desired result. \[
\lim_{L \to \infty} \frac{\sqrt{8\pi} \sigma L^{1/2}}{\Sf Z_L} \Sf Q^{(\mb 1), \textup{flat}}_m (\mb M_L, \mb N_L, \mb T_L) = \Sf P^{\textup{flat}}(\mb t, \mb x, \mb h).
\]

\subsection{Proof of Lemma \ref{lem:restofterms}}
\label{sec:proofrestofterms}

As in (6.24) of \cite{BCT}, the $z_i$-contours in the formula of $\Sf Q_m^{(\mb n), \textup{flat}}$ may be taken to be circles of radius 2 centered at the origin. As the $z_i$-dependencies of the flat case are identical to that of the step case, we obtain
\begin{equation}\label{eq:zbound}
\big| \Sf Q^{(\mb n), \textup{flat}}_m(\mb M_L, \mb N_L, \mb T_L) \big| \le 3^{|\mb n|} \max_{|z_i| = 2,\; i = 1, \dots, m-1} \big| \Sf D^{(\mb n), \textup{flat}}_{\mb M_L, \mb N_L, \mb T_L}(\mb z) \big|.
\end{equation}
The cancellation in \eqref{eq:app-nc-flat-cancel}, with the terminal
factor $S_{n_m}$ included, gives
\begin{equation}\label{eq:cancel}
\dot\Pi_{\mb n}(\bs\xi, \bs\eta) := (-1)^{n_1} \frac{\Pi_{\mb n}(\bs\xi, \bs\eta)}{K_{n_1}(\bs\xi^1 | \bs\eta^1)} =  \lt[ \prod_{i=1}^{m-1} K_{n_i + n_{i+1}}(\bs\xi^i, \bs\eta^{i+1} | \bs\eta^i, \bs\xi^{i+1}) \rt] K_{n_m}(\bs\xi^m | \bs\eta^m) S_{n_m}(\bs\xi^m | \bs\eta^m).
\end{equation}
Consequently, we may write the integrand of $\Sf D^{(\mb n), \textup{flat}}$ as $\dot \Pi_{\mb n}(\bs\xi, \bs\eta)\, \Sf F^{(\mb n)}_{\mb M_L, \mb N_L, \mb T_L}(\bs\xi, \bs\eta) \det[\delta(-\eta^1_r - 1, \xi^1_s)]_{r,s=1}^{n_1}$,

Recall the circles $\Sigma^{b,L}_\pm$ of Lemma 5.3 of \cite{BCT}. We take 
\[
C_{1,R} = \Sigma^{0,L}_+, \qquad C_{1,L} = -1 - C_{1,R} = \Sigma^{0,L}_-,
\]
for $i = 2, \dots, m$, as in Subsection 6.3 of \cite{BCT}, we take
\[
C^{\text{in}}_{i,L} = \Sigma^{-(i-1), L}_-, \quad C^{\text{out}}_{i,L} = \Sigma^{i-1, L}_-, \quad C^{\text{in}}_{i,R} = \Sigma^{-(i-1), L}_+, \quad C^{\text{out}}_{i,R} = \Sigma^{i-1, L}_+.
\]
For all sufficiently large $L$, all contours are contained in the disk $\{|z| \le 2\}$, the distance between any two distinct contours on the same side is at least $L^{-1/2}$, and the distance between any contour on the left and any contour on the right is bounded below by a positive constant. Expanding the determinant,
\[
\det\big[ \delta(-\eta^1_r - 1, \xi^1_s) \big]_{r,s=1}^{n_1} = \sum_{\pi \in S_{n_1}} \operatorname{sgn}(\pi) \prod_{s=1}^{n_1} \delta(-\eta^1_{\pi(s)} - 1, \xi^1_s),
\]
and we perform each of the $n_1!$ integrals, where we are coupling \[
\xi^1_s = -\eta^1_{\pi(s)} - 1, \qquad s = 1, \dots, n_1.
\]
We may first bound the integrals in the expression of $\Sf D^{(\mb n), \textup{flat}}_{\mb M, \mb N, \mb L}$ by mass and length, then multiply by $n_1!$, so we obtain the following bound akin to (6.25) in \cite{BCT}, \begin{equation}\label{eq:Dbound}
\big| \Sf D^{(\mb n), \textup{flat}}_{\mb M_L, \mb N_L, \mb T_L}(\mb z) \big| \le C^{|\mb n|} n_1! \max \big| \dot\Pi_{\mb n}(\bs\xi, \bs\eta) \Sf F^{(\mb n)}_{\mb M_L, \mb N_L, \mb T_L}(\bs\xi, \bs\eta) \big|,
\end{equation}
where $C$ is some sufficiently large constant and the maximum is taken over $\bs\eta^1 \in (C_{1,R})^{n_1}$, over $\bs\xi^i, \bs\eta^i$ on the unions $C^{\text{in}} \cup C^{\text{out}}$ of the corresponding contours for $i = 2, \dots, m$, and with $\bs\xi^1$ given by the substitution above enforced by the $\delta$-kernel.

We next bound the two factors $\dot\Pi_{\mb n}(\bs\xi, \bs\eta)$, $ \Sf F^{(\mb n)}_{\mb M_L, \mb N_L, \mb T_L}(\bs\xi, \bs\eta)$ appearing on the right-hand side of
\eqref{eq:Dbound}. 

Recall that, for two vectors
$\mb w=(w_1,\ldots,w_q)$ and $\mb w'=(w'_1,\ldots,w'_q)$ satisfying
    $\min_{r,s}|w_r-w'_s|\ge d>0$,
Hadamard's inequality gives
\begin{equation}
\label{eq:hadamard-cauchy-flat}
    \left|K_q(w\mid w')\right|
    =
    \left|
        \det\left[\frac{1}{w_r-w'_s}\right]_{r,s=1}^{q}
    \right|
    \le
    \prod_{r=1}^{q}
    \left(
        \sum_{s=1}^{q}\frac{1}{|w_r-w'_s|^2}
    \right)^{1/2}
    \le
    \frac{q^{q/2}}{d^q}.
\end{equation}

After imposing
\[
    \xi_s^1=-1-\eta_{\pi(s)}^1,
    \qquad s=1,\ldots,n_1,
\]
the variables $\xi_s^1$ lie on $C_{1,L}=\Sigma_-^{0,L}$.
Thus, every pair of contours occurring in a Cauchy determinant in
$\dot\Pi_{\mathbf n}$ is either separated by a positive constant or
by at least $L^{-1/2}$. Notice that the delta coupling does not create a
collision: it maps $C_{1,R}=\Sigma_+^{0,L}$ exactly onto
$C_{1,L}=\Sigma_-^{0,L}$.

Applying \eqref{eq:hadamard-cauchy-flat} to all the Cauchy
determinants in $\dot\Pi_{\mathbf n}$ therefore yields
\begin{align}
    \left|\dot\Pi_{\mathbf n}(\boldsymbol\xi,\boldsymbol\eta)\right|
    &\le
    C^{|\mathbf n|}
    L^{|\mathbf n|-n_1/2}
    \left[
        \prod_{i=1}^{m-1}
        (n_i+n_{i+1})^{(n_i+n_{i+1})/2}
    \right]
    n_m^{n_m/2}
    \left|S_{n_m}(\xi^m\mid\eta^m)\right|.
\label{eq:Pi-dot-prebound}
\end{align}
Here, the power $L^{|\mathbf n|-n_1/2}$, rather than
$L^{|\mathbf n|}$, reflects the absence of the initial factor
$K_{n_1}(\eta^1\mid\xi^1)$.

Since all contours lie in $\{z \in \mathbb C: |z|\le 2\}$,
\[
    \left|S_{n_m}(\xi^m\mid\eta^m)\right|
    =
    \left|
        \sum_{k=1}^{n_m}(\xi_k^m-\eta_k^m)
    \right|
    \le 4n_m.
\]
Using
\[
    q^q\le 4^q q!,
    \qquad
    (p+q)^{p+q}\le 8^{p+q}p!q!,
\]
and enlarging $C$ if necessary, we conclude from
\eqref{eq:Pi-dot-prebound} that
\begin{equation}
\label{eq:Pi-dot-bound}
    \max
    \left|\dot\Pi_{\mathbf n}
        (\boldsymbol\xi,\boldsymbol\eta)\right|
    \le
    C^{|\mathbf n|}
    \mathbf n!\,
    L^{|\mathbf n|-n_1/2}.
\end{equation}

It remains to bound the exponential factor $ \Sf F^{(\mb n)}_{\mb M_L, \mb N_L, \mb T_L}(\bs\xi, \bs\eta)$. The delta substitution
does not alter the steepest-descent estimate. Consequently, Lemma 5.3 of \cite{BCT} and
\eqref{eq:frati} imply that, uniformly over all the contours,
\begin{align}
    \frac{1}{\Sf Z_L}
    \left|
        \Sf F_{\mathbf M_L,\mathbf N_L,\mathbf T_L}^{(\mathbf n)}
        (\boldsymbol\xi,\boldsymbol\eta)
    \right|
    &\le
    C^{2|\mathbf n|}
    \prod_{i=1}^{m}
    \left|
        \frac{\Sf f_{L,i}(z^-)}{\Sf f_{L,i}(z^+)}
    \right|^{n_i-1} \notag\\
    &\le
    C^{2|\mathbf n|}
    \exp\left\{
        -\frac{\tau J(\ell)}{2}
        (|\mathbf n|-m)L
    \right\}.
\label{eq:F-flat-uniform-bound}
\end{align}

Combining \eqref{eq:zbound}, \eqref{eq:Dbound}, \eqref{eq:Pi-dot-bound},
and \eqref{eq:F-flat-uniform-bound},
we obtain
\begin{equation}
\label{eq:Q-flat-n-bound}
    \frac{1}{\Sf Z_L}
    \left|
       \Sf Q_m^{(\mathbf n),\mathrm{flat}}
        (\mathbf M_L,\mathbf N_L,\mathbf T_L)
    \right|
    \le
    C^{|\mathbf n|}
    n_1!\,\mathbf n!\,
    L^{|\mathbf n|-n_1/2}
    \exp\left\{
        -\frac{\tau J(\ell)}{2}
        (|\mathbf n|-m)L
    \right\}.
\end{equation}
The factor $n_1!$ comes from the $n_1!$ terms in the expansion of
the delta determinant.

If $\mathbf n\in \mathbb N^m \setminus\{\mathbf 1\}$, then $|\mathbf n|\ge m+1$, and hence
    $|\mathbf n|-m
    \ge \frac{|\mathbf n|}{m+1}$. Since $m$ is fixed and $J(\ell)>0$, the polynomial and constant
factors in \eqref{eq:Q-flat-n-bound} can be absorbed into the
exponential. Thus, if $\mathbf n\in \mathbb N^m \setminus\{\mathbf 1\}$, there exist constants $c_1>0$ and $L_1>0$ such
that
\begin{equation}
\label{eq:Q-flat-exp-bound}
    \frac{1}{\Sf Z_L}
    \left|
        \Sf Q_m^{(\mathbf n),\mathrm{flat}}
        (\mathbf M_L,\mathbf N_L,\mathbf T_L)
    \right|
    \le
    e^{-c_1|\mathbf n|L}n_1!\,\mathbf n!,
\end{equation}
for every $L\ge L_1$.

Set $q_L=e^{-c_1L}$. We find
\begin{align*}
    \frac{1}{\Sf Z_L}
    \sum_{\mathbf n\in\mathbb N^m\setminus\{\mathbf 1\}}
    \frac{
        \left|
            \Sf Q_m^{(\mathbf n),\mathrm{flat}}
            (\mathbf M_L,\mathbf N_L,\mathbf T_L)
        \right|
    }{(\mathbf n!)^2}
    &\le
    \sum_{\mathbf n\in\mathbb N^m}
    \frac{q_L^{|\mathbf n|}}{n_2!\cdots n_m!} \\
    &=
    \left(\sum_{n_1=1}^{\infty}q_L^{n_1}\right)
    \prod_{i=2}^{m}
    \left(\sum_{n_i=1}^{\infty}
        \frac{q_L^{n_i}}{n_i!}\right) \\
    &=
    \frac{q_L}{1-q_L}
    (e^{q_L}-1)^{m-1}.
\end{align*}
Therefore, there exist $c>0$ and
$L_2>0$ such that
\[
    \frac{1}{\Sf Z_L}
    \sum_{\mathbf n\in\mathbb N^m\setminus\{\mathbf 1\}}
    \frac{
        \left|
            \Sf Q_m^{(\mathbf n),\mathrm{flat}}
            (\mathbf M_L,\mathbf N_L,\mathbf T_L)
        \right|
    }{(\mathbf n!)^2}
    \le e^{-cL}
\]
for every $L\ge L_2$. This proves Lemma \ref{lem:restofterms}.

\section{Conditional Fluctuations Heuristics}
\label{sec:flat-conditional-fluctuation-heuristic}

We follow the reasoning of \cite[Section~2.2]{BCT}, but include the
additional Gaussian component arising in the flat model. At the level of
the conditional law of large numbers, the candidate mid-way points on
the favorable path are
\[
    (atN,atN),\qquad t\in(0,1).
\]
To capture the next-order fluctuations, we enlarge this family by
allowing an \(N^{1/2}\)-scale transverse displacement. Thus, for
\((t,s)\in(0,1)\times\mathbb R\), we consider the more general mid-way
points
\begin{equation}
    \mathbf{p}_t^sN
    =
    \left(
        atN+s\frac{a\sigma}{\sqrt D}N^{1/2},
        atN-s\frac{a\sigma}{\sqrt D}N^{1/2}
    \right).
    \label{eq:flat-fluctuation-midpoint}
\end{equation}
The parameter \(t\) specifies the macroscopic attachment location on
the conditioned backbone, while \(s\) describes its transverse
displacement on the fluctuation scale.

Theorem~\ref{thm:cond-diag-fluc} gives the \(N^{1/2}\)-scale fluctuations of
\(\mathcal L^{\mathrm{flat}}(\mathbf{p}_t^sN)\) along this enlarged family of
mid-way points. In the notation of that theorem, its contribution is
\begin{equation}
    t\ell N
    +
    \sigma 
    \left(
        \B_1(t)
        -
        \left|
            \B_2(t)-(1-t)Z-s
        \right|
    \right)N^{1/2}.
    \label{eq:flat-diagonal-fluctuation-input}
\end{equation}

Suppose that the maximizing path to \((xN,yN)\) uses the conditioned
backbone. The conditional-LLN calculation shows that its macroscopic
attachment time is \(t=t_c\). Following the favorable path up to a
general mid-way point \(\mathbf{p}_{t_c}^sN\), and then continuing along an
ordinary point-to-point geodesic to \((xN,yN)\), leads to the
variational approximation
\begin{equation}
    \mathcal L^{\mathrm{flat}}(xN,yN)
    \approx
    \max_{s\in\mathbb R}
    \left\{
        \mathcal L^{\mathrm{flat}}\bigl(\mathbf{p}_{t_c}^s N\bigr)
        +
        \mathcal L_{\mathbf{p}_{t_c}^sN}(xN,yN)
    \right\}.
    \label{eq:flat-midway-variational-ansatz}
\end{equation}
Here \(\mathcal L_{\mathbf{p}_{t_c}^sN}(xN,yN)\) denotes the ordinary
point-to-point last passage time from \(\mathbf{p}_{t_c}^sN\) to
\((xN,yN)\). Thus, after fixing the macroscopic optimizer \(t_c\), it
remains to optimize over the transverse displacement \(s\).

Using unconditional result, we expect
\begin{equation}
\begin{aligned}
\mathcal L_{\mathbf p_{t_c}^sN}(xN,yN)
\approx\;&
\left(
  \sqrt{x-at_c-s\frac{a\sigma}{\sqrt D}N^{-1/2}}
  +
  \sqrt{y-at_c+s\frac{a\sigma}{\sqrt D}N^{-1/2}}
\right)^2N \\
&+
\frac{
  \left(\sqrt{x-at_c}+\sqrt{y-at_c}\right)^{4/3}
}{
  \left((x-at_c)(y-at_c)\right)^{1/6}
}
\TW_2N^{1/3} \\
=\;&
\left(\sqrt{x-at_c}+\sqrt{y-at_c}\right)^2N
+\sigma sR(t_c)N^{1/2} \\
&+
\frac{
  \left(\sqrt{x-at_c}+\sqrt{y-at_c}\right)^{4/3}
}{
  \left((x-at_c)(y-at_c)\right)^{1/6}
}
\TW_2N^{1/3}
+O(1).
\end{aligned}
\label{eq:flat-continuation-shape-expansion}
\end{equation}
where
\begin{equation}
    R(t)
    =
    \frac{a}{\sqrt D}
    \left(
        \sqrt{\frac{x-at}{y-at}}
        -
        \sqrt{\frac{y-at}{x-at}}
    \right).
    \label{eq:flat-continuation-linear-coefficient}
\end{equation}
Combining \eqref{eq:flat-diagonal-fluctuation-input} and
\eqref{eq:flat-continuation-shape-expansion}, and setting \(t=t_c\),
the \(N^{1/2}\)-order contribution of the conditioned route is
\begin{equation}
    \sigma\mathcal ZN^{1/2},
    \qquad
    \mathcal Z
    =
    \max_{s\in\mathbb R}
    \left\{
        \B_1(t_c)
        -
        \left|
            \B_2(t_c)-(1-t_c)Z-s
        \right|
        +
        R(t_c)s
    \right\}.
    \label{eq:flat-maximized-second-order-contribution}
\end{equation}

\begin{itemize}
    \item If \((x,y)\in\mathcal R_1^\circ\), then \(t_c=1\). The Brownian
bridges vanish at \(1\), the Gaussian term is multiplied by
\(1-t_c=0\), and the strict inequalities defining the interior of
\(\Omega_1\) give \(\lvert R(1)\rvert<1\). Hence the maximum in
\eqref{eq:flat-maximized-second-order-contribution} is attained at
\(s=0\) and equals zero. The \(N^{1/2}\)-term therefore cancels,
leaving the ordinary point-to-point fluctuation from \((aN,aN)\) to
\((xN,yN)\). Thus, using unconditional point-to-point fluctuations result \eqref{eq:uncond-ptp-fluc},
\[
    \mathcal L^{\mathrm{flat}}(xN,yN)
    \approx
    h^{\mathrm{flat}}(x,y)N + 
        \frac{(\sqrt{x-a}+\sqrt{y-a})^{4/3}}{((x-a)(y-a))^{1/6}}
        \TW_2N^{1/3},
\]
which yields \eqref{eq:flat-end-fluctuation-conjecture}.\

\item If  \((x,y)\in\mathcal R_2^\circ\) and \(x\neq y\). If \(y<x\),
then
    $\frac{y-at_c}{x-at_c}=\frac1m,
    R(t_c)=1$; if \(y>x\), then
$
    \frac{y-at_c}{x-at_c}=m,
    R(t_c)=-1.
$ Consequently,
$\mathcal Z
    =
    \B_1(t_c)\pm\B_2(t_c)\mp(1-t_c)Z
    =
    \sqrt2\,\B_\pm(t_c)\mp(1-t_c)Z
$ for $y<x$ and $y>x$ separately. The point-to-point \(N^{1/3}\)-fluctuation of the continuation is
negligible on this \(N^{1/2}\) scale. Therefore, 
\[
    \mathcal L^{\mathrm{flat}}(xN,yN)
    \approx
    h^{\mathrm{flat}}(x,y)N + 
        \bigl(\sqrt2\,\B_\pm(t_c)\mp(1-t_c)Z\bigr)N^{1/2}
\]for $y<x$ and $y>x$ separately, giving
\eqref{eq:flat-middle-fluctuation-conjecture}.
\item Outside \(\overline{\mathcal R_1\cup\mathcal R_2}\), the
ordinary flat orthogonal route has the strictly larger first-order value, and the
conditioning should not affect its leading fluctuation. Translating the
endpoint along the anti-diagonal reduces the latter problem to the
diagonal point
\[
    \left(\frac{x+y}{2}N,\frac{x+y}{2}N\right).
\]
Therefore, by the unconditional line-to-point fluctuations result \eqref{eq:uncond-ltp-fluc}, we have
\[
     \mathcal L^{\mathrm{flat}}(xN,yN)
    \approx
    h^{\mathrm{flat}}(x,y)N + 2^{1/3}(x+y)^{1/3}
        \TW_1N^{1/3},
\]
which gives \eqref{eq:flat-unaffected-fluctuation-conjecture}.
\end{itemize}
This completes the heuristic argument for Conjecture \ref{conj:flat-fluctuations}.

\newpage

\appendix

\section{Notation correspondence for the line-to-point formula}
\label{app:notation-flat}

We derive the notation in Section~\ref{sec:multi-point-formula} from
Liu's multi-point TASEP formula \cite{liu2021}, using the Cauchy-determinant
convention of \cite{BCT}. The calculation separates three ingredients:
the exponential weights, the rational factors in the integration variables,
and the auxiliary contour parameters. In particular, it identifies the
additional factor associated with the flat initial condition and explains
the passage from the mixed distribution to its conditional density.

\subsection{Parameters and exponential weights}
\label{app:nc-parameters}

Assume the hypotheses of Proposition~\ref{prop:multipointformula}.
Write $x_k(t)$ for TASEP with initial positions $x_k(0)=-2k$,
$k\in\Z$. Under the usual coupling with the line-to-point model of
Definition~\ref{def:flat-explpp}, whose weights on the starting line
$\mathbb L$ are zero, we have
\begin{equation}
\label{eq:app-nc-tasep}
    \{\mathcal L^{\mathrm{flat}}(M_i,N_i)>T_i\}
    =\{x_{N_i}(T_i)<M_i-N_i\}.
\end{equation}
Thus Liu's parameters are $a_i=M_i-N_i$, $k_i=N_i$, and $t_i=T_i$.
To distinguish the auxiliary variables before and after the change of
sign, we denote Liu's auxiliary variables by
$\bs\zeta=(\zeta_1,\ldots,\zeta_{m-1})$ and set $\bs\zeta=-\mb z$.
The remaining correspondences use the
\hyperref[not:integration-variables]{integration variables} and
\hyperref[not:multipoint-contours]{nested contours} introduced in
Section~\ref{sec:multipoint-notation}:
\[
\begin{array}{c|c}
\text{Notation in \cite{liu2021}} & \text{Notation used here} \\ \hline
U^{(i)}=(u_1^{(i)},\ldots,u_{n_i}^{(i)})
    & \bs\xi^i=(\xi_1^i,\ldots,\xi_{n_i}^i) \\
V^{(i)}=(v_1^{(i)},\ldots,v_{n_i}^{(i)})
    & \bs\eta^i=(\eta_1^i,\ldots,\eta_{n_i}^i) \\
\Sigma_{1,L},\ \Sigma_{1,R} & C_{1,L},\ C_{1,R} \\
\Sigma_{i,L}^{\mathrm{in/out}},\ \Sigma_{i,R}^{\mathrm{in/out}}
    & C_{i,L}^{\mathrm{in/out}},\ C_{i,R}^{\mathrm{in/out}}\quad(i\geq2).
\end{array}
\]
The contours retain the nesting and counterclockwise orientations
specified there. For the flat case, choose in addition
$C_{1,L}=-1-C_{1,R}$, with the left and right contours contained in
$\{\operatorname{Re}w<-1/2\}$ and $\{\operatorname{Re}w>-1/2\}$,
respectively.

Liu's function $F_i$ is the
\hyperref[not:exponential-weight]{exponential weight $f_{M_i,N_i,T_i}$}
defined in the main text:
\[
    F_i(w)=w^{k_i}(1+w)^{-a_i-k_i}e^{t_iw}
           =f_{M_i,N_i,T_i}(w),\qquad F_0(w)=1.
\]
To distinguish Liu's $f_i$ from $\Sf f_i$ in
\eqref{eq:weight-product-main}, denote the former by $f_i^{\mathrm{Liu}}$.
Then
\begin{equation}
\label{eq:app-nc-weights}
    f_i^{\mathrm{Liu}}(w)=
    \begin{cases}
        \Sf f_i(w),&w\in\Omega_L\setminus\{-1\},\\
        \Sf f_i(w)^{-1},&w\in\Omega_R\setminus\{0\},
    \end{cases}
    \qquad
    \Sf f_i(w)=\frac{f_{M_i,N_i,T_i}(w)}
                       {f_{M_{i-1},N_{i-1},T_{i-1}}(w)}.
\end{equation}
Here $M_0=N_0=T_0=0$. Consequently, the product of exponential
weights in Liu's integrand is exactly
$\Sf F^{(\mb n)}_{\mb M,\mb N,\mb T}$ from
\eqref{eq:weight-product-main}:
\begin{equation}
\label{eq:app-nc-F}
    \prod_{i=1}^m\prod_{j=1}^{n_i}
       f_i^{\mathrm{Liu}}(\xi_j^i)f_i^{\mathrm{Liu}}(\eta_j^i)
    =\prod_{i=1}^m\prod_{j=1}^{n_i}
       \frac{\Sf f_i(\xi_j^i)}{\Sf f_i(\eta_j^i)}
    =\Sf F^{(\mb n)}_{\mb M,\mb N,\mb T}(\bs\xi,\bs\eta).
\end{equation}

Throughout this appendix, $\mb n\in\N_0^m$,
$|\mb n|=\sum_i n_i$, and $\mb n!=\prod_i n_i!$.
Empty products and determinants are $1$; in particular, $K_0=1$,
whereas $S_0=0$.

\subsection{Vandermonde products and Cauchy determinants}
\label{app:nc-cauchy}

Use Liu's Vandermonde convention
\[
    \Delta(\mb r)=\prod_{i<j}(r_j-r_i),\qquad
    \Delta(\mb r;\mb s)=\prod_{i,j}(r_i-s_j).
\]
For vectors $\mb r,\mb s\in\C^n$, the
\hyperref[not:cauchy-determinant]{Cauchy determinant $K_n$}
defined in the main text satisfies
\begin{equation}
\label{eq:app-nc-cauchy-sign}
    K_n(\mb r\mid\mb s)
      =(-1)^{n(n-1)/2}
        \frac{\Delta(\mb r)\Delta(\mb s)}{\Delta(\mb r;\mb s)},
    \qquad
    K_n(\mb s\mid\mb r)=(-1)^nK_n(\mb r\mid\mb s).
\end{equation}
In particular, the sign in the reciprocal identity is also
$(-1)^{n(n-1)/2}$:
\[
    \frac{\Delta(\mb r;\mb s)}{\Delta(\mb r)\Delta(\mb s)}
      =\frac{(-1)^{n(n-1)/2}}{K_n(\mb r\mid\mb s)}.
\]

For $1\leq i<m$, set
\[
    C_i(\bs\xi,\bs\eta)
      :=\frac{\Delta(\bs\xi^i;\bs\eta^{i+1})
                    \Delta(\bs\eta^i;\bs\xi^{i+1})}
                   {\Delta(\bs\xi^i;\bs\xi^{i+1})
                    \Delta(\bs\eta^i;\bs\eta^{i+1})}.
\]
Factoring the Cauchy determinant for the two concatenated blocks gives
\begin{equation}
\label{eq:app-nc-block}
\begin{split}
    &K_{n_i+n_{i+1}}
       (\bs\xi^i,\bs\eta^{i+1}\mid\bs\eta^i,\bs\xi^{i+1})\\
    &\hspace{1cm}=
       K_{n_i}(\bs\xi^i\mid\bs\eta^i)
       K_{n_{i+1}}(\bs\eta^{i+1}\mid\bs\xi^{i+1})
       C_i(\bs\xi,\bs\eta).
\end{split}
\end{equation}
The undifferentiated factor
\hyperref[not:undifferentiated-flat]{$\wt\Pi_{\mb n}$}
used in the proof of Proposition~\ref{prop:multipointformula} is
\begin{equation}
\label{eq:app-nc-Pitilde}
\begin{split}
    \wt\Pi_{\mb n}(\bs\xi,\bs\eta)
      :={}&K_{n_1}(\bs\eta^1\mid\bs\xi^1)\\
      &\times\left[\prod_{i=1}^{m-1}
       K_{n_i+n_{i+1}}
       (\bs\xi^i,\bs\eta^{i+1}\mid\bs\eta^i,\bs\xi^{i+1})\right]
       K_{n_m}(\bs\xi^m\mid\bs\eta^m).
\end{split}
\end{equation}
Thus the rational factor in \eqref{eq:Pi-main} satisfies
$\Pi_{\mb n}=\wt\Pi_{\mb n}S_{n_m}(\bs\xi^m\mid\bs\eta^m)$.
Multiplying \eqref{eq:app-nc-block} over $i$ and then attaching the two
end factors yields
\begin{equation}
\label{eq:app-nc-vandermonde}
    \left[\prod_{i=1}^m
      \frac{\Delta(\bs\xi^i)^2\Delta(\bs\eta^i)^2}
           {\Delta(\bs\xi^i;\bs\eta^i)^2}\right]
    \prod_{i=1}^{m-1}C_i(\bs\xi,\bs\eta)
    =(-1)^{|\mb n|}\wt\Pi_{\mb n}(\bs\xi,\bs\eta).
\end{equation}
Indeed, the product on the right of \eqref{eq:app-nc-Pitilde} reduces to
$\prod_i K_{n_i}(\bs\xi^i\mid\bs\eta^i)
K_{n_i}(\bs\eta^i\mid\bs\xi^i)\prod_i C_i$,
and \eqref{eq:app-nc-cauchy-sign} supplies the sign $(-1)^{|\mb n|}$.

\subsection{The flat initial-condition factor}
\label{app:nc-flat-factor}

The additional factor in \eqref{eq:Dnflat}, relative to the step
case, comes from Liu's flat series \cite[Eq.~(2.19)]{liu2021}:
\begin{equation}
\label{eq:app-nc-flat-factor}
\begin{split}
    B_{n_1}^{\mathrm{flat}}(\bs\xi^1,\bs\eta^1)
       &:=(-1)^{n_1(n_1+1)/2}
       \frac{\Delta(\bs\xi^1;\bs\eta^1)}
            {\Delta(\bs\xi^1)\Delta(\bs\eta^1)}
       \det\!\left[\delta(-\eta_r^1-1,\xi_s^1)\right]_{r,s=1}^{n_1}\\
       &=\frac{(-1)^{n_1}}{K_{n_1}(\bs\xi^1\mid\bs\eta^1)}
       \det\!\left[\delta(-\eta_r^1-1,\xi_s^1)\right]_{r,s=1}^{n_1}.
\end{split}
\end{equation}
The last equality follows because
$n_1(n_1+1)/2+n_1(n_1-1)/2=n_1^2$ has the same parity as $n_1$.

With the
\hyperref[not:delta-kernel]{delta-kernel normalization},
we have
\begin{equation}
\label{eq:app-nc-delta}
    \int_{C_{1,L}}\delta(-\eta-1,\xi)g(\xi)
           \frac{\dd\xi}{2\pi i}=g(-\eta-1),
    \qquad \eta\in C_{1,R},
\end{equation}
for continuous test functions $g$ on $C_{1,L}$. The condition
$C_{1,L}=-1-C_{1,R}$ ensures that the evaluation point belongs to the
integration contour. Using the
\hyperref[not:delta-determinant]{determinant convention},
we expand
\[
    \det\!\left[\delta(-\eta_r^1-1,\xi_s^1)\right]_{r,s=1}^{n_1}
     =\sum_{\sigma\in\mathfrak S_{n_1}}\operatorname{sgn}(\sigma)
       \prod_{r=1}^{n_1}\delta(-\eta_r^1-1,\xi_{\sigma(r)}^1)
\]
and applying \eqref{eq:app-nc-delta} in each $\xi^1$ variable.
In particular, this operation introduces no further factor of $2\pi i$.

There is also a useful cancellation:
\begin{equation}
\label{eq:app-nc-flat-cancel}
\begin{split}
    \wt\Pi_{\mb n}B_{n_1}^{\mathrm{flat}}
      ={}&\left[\prod_{i=1}^{m-1}
        K_{n_i+n_{i+1}}
        (\bs\xi^i,\bs\eta^{i+1}\mid\bs\eta^i,\bs\xi^{i+1})\right]\\
      &\times K_{n_m}(\bs\xi^m\mid\bs\eta^m)
       \det\!\left[\delta(-\eta_r^1-1,\xi_s^1)\right]_{r,s=1}^{n_1}.
\end{split}
\end{equation}
Multiplying the Cauchy-determinant product on the right by
$S_{n_m}(\bs\xi^m\mid\bs\eta^m)$ gives the factor $\dot\Pi_{\mb n}$
defined in \eqref{eq:cancel}. Quotients involving $K_{n_1}$
in the flat integrand may always be interpreted through this cancellation.

\subsection{Contour measures and the change of auxiliary variables}
\label{app:nc-measures}

Liu's measure \cite[Eq.~(2.4)]{liu2021}, after $\bs\zeta=-\mb z$,
is, for $i\geq2$ and $\alpha\in\{L,R\}$,
\begin{equation}
\label{eq:app-nc-measure}
\begin{split}
    \int_{\Sigma_{i,\alpha}}g(w)\,\dd\mu_{-\mb z}(w)
       =\frac{1}{1+z_{i-1}}
       \left[\int_{C_{i,\alpha}^{\mathrm{in}}}g(w)\frac{\dd w}{2\pi i}
       +z_{i-1}\int_{C_{i,\alpha}^{\mathrm{out}}}g(w)
                    \frac{\dd w}{2\pi i}\right].
\end{split}
\end{equation}
At level $1$, the measure is simply $\dd w/(2\pi i)$ on $C_{1,\alpha}$.
Thus the coefficient of the inner contour is $1$, and that of the outer
contour is $z_{i-1}$, after extracting $(1+z_{i-1})^{-1}$.

For a function or delta-kernel integrand $G$, group the contour
integrations appearing in \eqref{eq:Dnflat} into the operator
\begin{equation}
\label{eq:app-nc-I}
\begin{split}
    \mathcal I_{\mb n}(\mb z)[G]
    :={}&\prod_{i=2}^m\prod_{j=1}^{n_i}
       \left[\int_{C_{i,L}^{\mathrm{in}}}\frac{\dd\xi_j^i}{2\pi i}
         +z_{i-1}\int_{C_{i,L}^{\mathrm{out}}}
                         \frac{\dd\xi_j^i}{2\pi i}\right]\\
    &\times\prod_{i=2}^m\prod_{j=1}^{n_i}
       \left[\int_{C_{i,R}^{\mathrm{in}}}\frac{\dd\eta_j^i}{2\pi i}
         +z_{i-1}\int_{C_{i,R}^{\mathrm{out}}}
                         \frac{\dd\eta_j^i}{2\pi i}\right]\\
    &\times\prod_{j=1}^{n_1}
       \left[\int_{C_{1,L}}\frac{\dd\xi_j^1}{2\pi i}
             \int_{C_{1,R}}\frac{\dd\eta_j^1}{2\pi i}\right]
       G(\bs\xi,\bs\eta).
\end{split}
\end{equation}
Products of integration operators act on the entire integrand to their
right. In the flat case the delta evaluations are performed first.

The explicit auxiliary factors in Liu's series become
\[
    \prod_{i=1}^{m-1}(1-\zeta_i)^{n_i}
          (1-\zeta_i^{-1})^{n_{i+1}}
      =\prod_{i=1}^{m-1}(1+z_i)^{n_i+n_{i+1}}z_i^{-n_{i+1}}.
\]
Meanwhile, the extracted measure factors give
\[
    \prod_{i=2}^m(1+z_{i-1})^{-2n_i}
       =\prod_{i=1}^{m-1}(1+z_i)^{-2n_{i+1}}.
\]
Combining these identities with \eqref{eq:app-nc-F},
\eqref{eq:app-nc-vandermonde}, and \eqref{eq:app-nc-flat-factor}, we obtain
\begin{equation}
\label{eq:app-nc-D-conversion}
    \mathcal D_{\mb n,Y_{\mathrm{flat}}}(-\mb z)
      =(-1)^{|\mb n|}
       \wt{\Sf D}^{(\mb n),\mathrm{flat}}_{\mb M,\mb N,\mb T}(\mb z)
       \prod_{i=1}^{m-1}
          \frac{(1+z_i)^{n_i-n_{i+1}}}{z_i^{n_{i+1}}},
\end{equation}
where the
\hyperref[not:undifferentiated-flat]{undifferentiated integral} is
\begin{equation}
\label{eq:app-nc-Dtilde}
    \wt{\Sf D}^{(\mb n),\mathrm{flat}}_{\mb M,\mb N,\mb T}(\mb z)
      :=\mathcal I_{\mb n}(\mb z)
       \left[\wt\Pi_{\mb n}\,
       \Sf F^{(\mb n)}_{\mb M,\mb N,\mb T}\,
       B_{n_1}^{\mathrm{flat}}\right].
\end{equation}
Equivalently,
\[
    \wt{\Sf D}^{(\mb n),\mathrm{flat}}_{\mb M,\mb N,\mb T}(\mb z)
      =(-1)^{|\mb n|}\mathcal D_{\mb n,Y_{\mathrm{flat}}}(-\mb z)
       \prod_{i=1}^{m-1}
          \frac{z_i^{n_{i+1}}}{(1+z_i)^{n_i-n_{i+1}}}.
\]
Expanding \eqref{eq:app-nc-Dtilde} gives precisely
\eqref{eq:Dnflat} with $\Pi_{\mb n}$ replaced by $\wt\Pi_{\mb n}$,
including its prefactor $(-1)^{n_1}(2\pi i)^{-2|\mb n|}$.

\subsection{From the mixed distribution to the conditional density}
\label{app:nc-conditioning}

Set
\[
    E_{m-1}^{\mathrm{flat}}
       :=\bigcap_{i=1}^{m-1}
           \{\mathcal L^{\mathrm{flat}}(M_i,N_i)>T_i\},\qquad
    H_m^{\mathrm{flat}}
       :=\p\big(E_{m-1}^{\mathrm{flat}},\,
                    \mathcal L^{\mathrm{flat}}(M_m,N_m)\leq T_m\big).
\]
The mixed-event version of Liu's formula (Proposition~2.3, with the flat
specialization of Proposition~2.16 in \cite{liu2021}) reads
\begin{equation}
\label{eq:app-nc-Liu-probability}
    H_m^{\mathrm{flat}}
      =\frac{(-1)^{m-1}}{(2\pi i)^{m-1}}
       \oint_{>1}\cdots\oint_{>1}
       \mathcal D_{Y_{\mathrm{flat}}}(\bs\zeta)
       \prod_{i=1}^{m-1}\frac{\dd\zeta_i}{\zeta_i(1-\zeta_i)},
\end{equation}
where
\[
    \mathcal D_{Y_{\mathrm{flat}}}(\bs\zeta)
      =\sum_{\mb n\in\N_0^m}\frac{1}{(\mb n!)^2}
          \mathcal D_{\mb n,Y_{\mathrm{flat}}}(\bs\zeta).
\]
Here $\oint_{>1}$ denotes a counterclockwise circle centered at $0$
with radius greater than $1$. The substitution $\zeta_i=-z_i$ preserves
this orientation and gives $\dd\zeta_i/\zeta_i=\dd z_i/z_i$;
there is no additional sign from the differentials. Substituting
\eqref{eq:app-nc-D-conversion} into
\eqref{eq:app-nc-Liu-probability} therefore gives
\begin{equation}
\label{eq:app-nc-mixed}
\begin{split}
    H_m^{\mathrm{flat}}
      ={}&\frac{1}{(2\pi i)^{m-1}}
       \sum_{\mb n\in\N_0^m}
       \frac{(-1)^{|\mb n|+m-1}}{(\mb n!)^2}\\
      &\times\oint_{>1}\cdots\oint_{>1}
       \wt{\Sf D}^{(\mb n),\mathrm{flat}}_{\mb M,\mb N,\mb T}(\mb z)
       \prod_{i=1}^{m-1}
          \frac{(1+z_i)^{n_i-n_{i+1}-1}}{z_i^{n_{i+1}+1}}\,\dd z_i.
\end{split}
\end{equation}

We now recover the
\hyperref[not:terminal-sum]{terminal factor $S_{n_m}$}
appearing in \eqref{eq:Pi-main}. Only the level-$m$ exponential weights
depend on $T_m$. Since
$\partial_{T_m}\Sf f_m(w)=w\Sf f_m(w)$, differentiation gives
\begin{equation}
\label{eq:app-nc-derivative}
    \partial_{T_m}\Sf F^{(\mb n)}_{\mb M,\mb N,\mb T}
      =\left[\sum_{j=1}^{n_m}(\xi_j^m-\eta_j^m)\right]
          \Sf F^{(\mb n)}_{\mb M,\mb N,\mb T}
      =S_{n_m}(\bs\xi^m\mid\bs\eta^m)
          \Sf F^{(\mb n)}_{\mb M,\mb N,\mb T}.
\end{equation}
Thus the factor $S_{n_m}$ is inserted \emph{inside} the contour integral:
\begin{equation}
\label{eq:app-nc-D-derivative}
\begin{split}
    \partial_{T_m}
       \wt{\Sf D}^{(\mb n),\mathrm{flat}}_{\mb M,\mb N,\mb T}(\mb z)
       &=\mathcal I_{\mb n}(\mb z)
          \left[\Pi_{\mb n}\,
             \Sf F^{(\mb n)}_{\mb M,\mb N,\mb T}\,
             B_{n_1}^{\mathrm{flat}}\right]\\
       &={\Sf D}^{(\mb n),\mathrm{flat}}_{\mb M,\mb N,\mb T}(\mb z).
\end{split}
\end{equation}
This is the expression in \eqref{eq:Dnflat}.
The contours may be held fixed during differentiation. After expanding
the delta determinant and performing the corresponding evaluations,
the usual Hadamard bounds for the remaining Cauchy determinants give
locally uniform absolute convergence of the series and its $T_m$
derivative. Indeed, on these fixed contours the new factor $S_{n_m}$
is bounded by a constant times $n_m$, which is absorbed by the factorial
majorants. This justifies the termwise differentiation, as in the proof
of \cite[Proposition~3.1]{BCT}.

It follows that
\[
    \partial_{T_m}H_m^{\mathrm{flat}}
      =\sum_{\mb n\in\N_0^m}\frac{1}{(\mb n!)^2}
         \Sf Q_m^{(\mb n),\mathrm{flat}}(\mb M,\mb N,\mb T),
\]
where the summand is exactly \eqref{eq:Qnflat}. To recover the sum over
$\N^m$ in \eqref{eq:Qmflat-main}, it remains to remove the terms with zero
components. If $n_m=0$, then $S_{n_m}=0$. Otherwise, if some component
vanishes, there is an index $i<m$ such that $n_i=0$ and $n_{i+1}>0$.
The polynomial ${\Sf D}^{(\mb n),\mathrm{flat}}(\mb z)$ has degree at most
$2n_{i+1}$ in $z_i$, by \eqref{eq:app-nc-I}. For this index,
\[
    {\Sf D}^{(\mb n),\mathrm{flat}}(\mb z)
       \frac{(1+z_i)^{-n_{i+1}-1}}{z_i^{n_{i+1}+1}}
        =O(z_i^{-2})\qquad(z_i\to\infty).
\]
Its only possible finite poles in $z_i$ are at $0$ and $-1$, both
inside the contour. Hence its $z_i$ integral vanishes. This is the
argument of \cite[Lemma~3.2]{BCT}; it applies unchanged because the
flat factor is independent of $\mb z$. We have proved
\begin{equation}
\label{eq:app-nc-Q-density}
    \partial_{T_m}H_m^{\mathrm{flat}}
      =\sum_{\mb n\in\N^m}\frac{1}{(\mb n!)^2}
         \Sf Q_m^{(\mb n),\mathrm{flat}}(\mb M,\mb N,\mb T)
      =\Sf Q_m^{\mathrm{flat}}(\mb M,\mb N,\mb T).
\end{equation}

For $m=1$, the same definitions have no auxiliary variables or outer
contour integrals. They identify the denominator in
\eqref{eq:conditional-ratio-main} as the one-point density:
\begin{equation}
\label{eq:app-nc-one-point}
    \partial_T\p\big(\mathcal L^{\mathrm{flat}}(M,N)\leq T\big)
      =\Sf Q_1^{\mathrm{flat}}(M,N,T)
      =\sum_{n\geq1}\frac{(-1)^n}{(n!)^2}
          \Sf D_{M,N,T}^{(n),\mathrm{flat}}.
\end{equation}
Taking the ratio of the mixed density and this marginal density gives
\begin{equation}
\label{eq:app-nc-conditional}
\begin{split}
    &\p\big(E_{m-1}^{\mathrm{flat}}\mid
                    \mathcal L^{\mathrm{flat}}(M_m,N_m)=T_m\big)\\
    &\qquad=
       \frac{\partial_{T_m}H_m^{\mathrm{flat}}}
            {\partial_{T_m}\p\big(
                    \mathcal L^{\mathrm{flat}}(M_m,N_m)\leq T_m\big)}
       =\frac{\Sf Q_m^{\mathrm{flat}}(\mb M,\mb N,\mb T)}
              {\Sf Q_1^{\mathrm{flat}}(M_m,N_m,T_m)}.
\end{split}
\end{equation}
The ratio is understood wherever the marginal density is positive,
with conditioning interpreted as in \eqref{eq:flat-conditioning-convention}.
This is \eqref{eq:conditional-ratio-main} in Proposition~\ref{prop:multipointformula}.

\section{Comparison with the step formula}
\label{app:notation-step}

For the step initial condition $x_k(0)=-k$, $k\geq1$, the notation
conversion is identical except that the factor
$B_{n_1}^{\mathrm{flat}}$ is absent. Thus define
\begin{align}
\label{eq:app-nc-step-Dtilde}
    \wt{\Sf D}^{(\mb n)}_{\mb M,\mb N,\mb T}(\mb z)
       &:=\mathcal I_{\mb n}(\mb z)
          \left[\wt\Pi_{\mb n}\,
                \Sf F^{(\mb n)}_{\mb M,\mb N,\mb T}\right],\\
\label{eq:app-nc-step-D}
    {\Sf D}^{(\mb n)}_{\mb M,\mb N,\mb T}(\mb z)
       &:=\mathcal I_{\mb n}(\mb z)
          \left[\Pi_{\mb n}\,
                \Sf F^{(\mb n)}_{\mb M,\mb N,\mb T}\right]
        =\partial_{T_m}
          \wt{\Sf D}^{(\mb n)}_{\mb M,\mb N,\mb T}(\mb z).
\end{align}
The conversion from Liu's coefficients is
\begin{equation}
\label{eq:app-nc-step-conversion}
    \wt{\Sf D}^{(\mb n)}_{\mb M,\mb N,\mb T}(\mb z)
       =(-1)^{|\mb n|}\mathcal D_{\mb n,Y_{\mathrm{step}}}(-\mb z)
         \prod_{i=1}^{m-1}
            \frac{z_i^{n_{i+1}}}{(1+z_i)^{n_i-n_{i+1}}}.
\end{equation}
In particular, the direct conversion of Liu's coefficient gives the
\emph{undifferentiated} quantity $\wt{\Sf D}$; the quantity $\Sf D$
is obtained only after inserting $S_{n_m}$ through differentiation.

Define
\begin{align}
\label{eq:app-nc-step-Qn}
    \Sf Q_m^{(\mb n)}(\mb M,\mb N,\mb T)
      &:=\frac{(-1)^{|\mb n|+m-1}}{(2\pi i)^{m-1}}
       \oint_{>1}\cdots\oint_{>1}
          \Sf D_{\mb M,\mb N,\mb T}^{(\mb n)}(\mb z)
       \prod_{i=1}^{m-1}
          \frac{(1+z_i)^{n_i-n_{i+1}-1}}{z_i^{n_{i+1}+1}}\,\dd z_i,\\
\label{eq:app-nc-step-Q}
    \Sf Q_m(\mb M,\mb N,\mb T)
      &:=\sum_{\mb n\in\N^m}\frac{1}{(\mb n!)^2}
                 \Sf Q_m^{(\mb n)}(\mb M,\mb N,\mb T).
\end{align}
These are the quantities of \cite[Section~3]{BCT}. In that reference,
the associated field, denoted here by $\mathcal L^{\mathrm{step}}$,
is point-to-point LPP from $(1,1)$ with the starting-site weight
included, as in Definition~\ref{def:explpp}. Its TASEP correspondence is
\[
    \{\mathcal L^{\mathrm{step}}(M,N)>T\}
       =\{x_N(T)<M-N\},
\]
and the same density calculation yields
\[
    \p\left(
       \bigcap_{i=1}^{m-1}
          \{\mathcal L^{\mathrm{step}}(M_i,N_i)>T_i\}
       \,\middle|\,
          \mathcal L^{\mathrm{step}}(M_m,N_m)=T_m\right)
       =\frac{\Sf Q_m(\mb M,\mb N,\mb T)}
              {\Sf Q_1(M_m,N_m,T_m)}.
\]
Thus the flat and step formulas have the same exponential weights,
auxiliary-variable factors, differentiation rule, and restriction to
positive multi-indices. The flat formula replaces the initial Cauchy
factor $K_{n_1}(\bs\eta^1\mid\bs\xi^1)$ by the delta determinant,
as displayed in \eqref{eq:app-nc-flat-cancel}.

\bibliographystyle{plain}
\bibliography{refs}

@misc{borodin_fluctuations_2006,
	title = {Fluctuations in the discrete {TASEP} with periodic initial configurations and the {Airy}\_1 process},
	url = {http://arxiv.org/abs/math-ph/0611071},
	doi = {10.48550/arXiv.math-ph/0611071},
	urldate = {2026-08-06},
	publisher = {arXiv},
	author = {Borodin, Alexei and Ferrari, Patrik L. and Prähofer, Michael},
	month = nov,
	year = {2006},
	note = {arXiv:math-ph/0611071},
}

@article{borodin_large_2008,
	title = {Large time asymptotics of growth models on space-like paths {II}: {PNG} and parallel {TASEP}},
	volume = {283},
	issn = {0010-3616, 1432-0916},
	shorttitle = {Large time asymptotics of growth models on space-like paths {II}},
	url = {http://arxiv.org/abs/0707.4207},
	doi = {10.1007/s00220-008-0515-4},
	number = {2},
	urldate = {2026-08-06},
	journal = {Communications in Mathematical Physics},
	author = {Borodin, Alexei and Ferrari, Patrik L. and Sasamoto, Tomohiro},
	month = oct,
	year = {2008},
	note = {arXiv:0707.4207 [math-ph]},
	pages = {417--449},
}

@article{sasamoto_spatial_2005,
	title = {Spatial correlations of the {1D} {KPZ} surface on a flat substrate},
	volume = {38},
	issn = {0305-4470, 1361-6447},
	url = {http://arxiv.org/abs/cond-mat/0504417},
	doi = {10.1088/0305-4470/38/33/L01},
	number = {33},
	urldate = {2026-08-06},
	journal = {Journal of Physics A: Mathematical and General},
	author = {Sasamoto, T.},
	month = aug,
	year = {2005},
	note = {arXiv:cond-mat/0504417},
	pages = {L549--L556},
}

@article{borodin_fluctuation_2007,
	title = {Fluctuation properties of the {TASEP} with periodic initial configuration},
	volume = {129},
	issn = {0022-4715, 1572-9613},
	url = {http://arxiv.org/abs/math-ph/0608056},
	doi = {10.1007/s10955-007-9383-0},
	number = {5-6},
	urldate = {2026-08-06},
	journal = {Journal of Statistical Physics},
	author = {Borodin, Alexei and Ferrari, Patrik L. and Prähofer, Michael and Sasamoto, Tomohiro},
	month = dec,
	year = {2007},
	note = {arXiv:math-ph/0608056},
	pages = {1055--1080},
}

@article{matetski2021,
	title = {The {KPZ} fixed point},
	volume = {227},
	issn = {00015962, 18712509},
	url = {http://arxiv.org/abs/1701.00018},
	doi = {10.4310/ACTA.2021.v227.n1.a3},
	number = {1},
	urldate = {2026-07-06},
	journal = {Acta Mathematica},
	author = {Matetski, Konstantin and Quastel, Jeremy and Remenik, Daniel},
	year = {2021},
	note = {arXiv:1701.00018 [math.PR]},
	pages = {115--203},
}

@misc{BCT,
	title = {Conditional exponential directed last passage percolation under a one-point upper large deviation event},
	copyright = {Creative Commons Attribution 4.0 International},
	url = {https://arxiv.org/abs/2508.04954},
	doi = {10.48550/ARXIV.2508.04954},
	urldate = {2026-06-01},
	publisher = {arXiv},
	author = {Baik, Jinho and Cordaro, Dylan and Tripathi, Tejaswi},
	year = {2025},
	note = {Version Number: 2},
}

@misc{liu2022,
	title = {A conditional scaling limit of the {KPZ} fixed point with height tending to infinity at one location},
	url = {http://arxiv.org/abs/2208.12215},
	doi = {10.48550/arXiv.2208.12215},
	urldate = {2026-06-29},
	publisher = {arXiv},
	author = {Liu, Zhipeng and Wang, Yizao},
	month = oct,
	year = {2022},
	note = {arXiv:2208.12215 [math.PR]
version: 2},
}

@misc{liu2021,
	title = {Multi-point distribution of {TASEP}},
	url = {http://arxiv.org/abs/1907.09876},
	doi = {10.48550/arXiv.1907.09876},
	urldate = {2026-07-06},
	publisher = {arXiv},
	author = {Liu, Zhipeng},
	month = nov,
	year = {2021},
	note = {arXiv:1907.09876 [math.PR]},
}

@article{rost1981,
	title = {Non-equilibrium behaviour of a many particle process: {Density} profile and local equilibria},
	volume = {58},
	issn = {1432-2064},
	shorttitle = {Non-equilibrium behaviour of a many particle process},
	url = {https://doi.org/10.1007/BF00536194},
	doi = {10.1007/BF00536194},
	language = {en},
	number = {1},
	urldate = {2026-07-06},
	journal = {Zeitschrift für Wahrscheinlichkeitstheorie und Verwandte Gebiete},
	author = {Rost, H.},
	month = mar,
	year = {1981},
	pages = {41--53},
}

@misc{Joh00,
	title = {Shape {Fluctuations} and {Random} {Matrices}},
	url = {http://arxiv.org/abs/math/9903134},
	doi = {10.48550/arXiv.math/9903134},
	urldate = {2026-07-06},
	publisher = {arXiv},
	author = {Johansson, Kurt},
	month = oct,
	year = {1999},
	note = {arXiv:math/9903134},
}

@misc{baik_one-point_2026,
	title = {One-point fluctuations for exponential last passage percolation under upper-tail conditioning},
	url = {http://arxiv.org/abs/2608.20552},
	doi = {10.48550/arXiv.2608.20552},
	urldate = {2026-09-16},
	publisher = {arXiv},
	author = {Baik, Jinho and Tripathi, Tejaswi},
	month = aug,
	year = {2026},
	note = {arXiv:2608.20552 [math.PR]},
}

\end{document}